\setlist[itemize]{topsep=0ex,itemsep=0ex,parsep=0.4ex}
\setlist[enumerate]{topsep=0ex,itemsep=0ex,parsep=0.4ex}
\newtheorem{theorem}{Theorem}
\newtheorem{lemma}[theorem]{Lemma}
\newtheorem{corollary}[theorem]{Corollary}
\theoremstyle{definition}
\newtheorem{remark}[theorem]{Remark}
\renewcommand{\ge}{\geqslant}
\renewcommand{\le}{\leqslant}
\renewcommand{\emptyset}{\varnothing}
\newcommand{\defn}[1]{\textcolor{Maroon}{\emph{#1}}}
\newcommand*{\eps}{\varepsilon}
\DeclareMathOperator{\Bin}{Bin}
\DeclareMathOperator{\Var}{Var}
\DeclareMathOperator{\Po}{Po}
\DeclareMathOperator{\polylog}{polylog}
\newcommand*{\bE}{\mathbb{E}}
\newcommand*{\bN}{\mathbb{N}}
\newcommand*{\bP}{\mathbb{P}}
\newcommand*{\cI}{\mathcal{I}}
\newcommand*{\cJ}{\mathcal{J}}
\newcommand*{\cR}{\mathcal{R}}
\newcommand*{\cC}{\mathcal{C}}
\newcommand*{\cX}{\mathcal{X}}
\newcommand*{\cY}{\mathcal{Y}}
\newcommand*{\cD}{\mathcal{D}}
\newcommand*{\cM}{\mathcal{M}}
\title{\texorpdfstring{\vspace{-4ex}}{}Reconstruction of shredded random matrices}
\date{\today}
\author{
Paul Balister\footnotemark[2], \quad
Gal Kronenberg\footnotemark[2]\  \thanks{Supported by the Royal Commission for
the Exhibition of 1851.}, \quad
Alex Scott\footnotemark[2]\ \thanks{Supported by EPSRC grant EP/X013642/1.}, \quad
Youri Tamitegama\footnotemark[2]
}
\begin{document}
\maketitle

\renewcommand{\thefootnote}{\fnsymbol{footnote}} 

\footnotetext[2]{Mathematical Institute, University of Oxford, United Kingdom
(\textsf{\{\href{mailto:balister@maths.ox.ac.uk}{balister},\href{mailto:kronenberg@maths.ox.ac.uk}{kronenberg},\href{mailto:scott@maths.ox.ac.uk}{scott},\href{mailto:tamitegama@maths.ox.ac.uk}{tamitegama}\}\allowbreak @maths.ox.ac.\allowbreak uk}).}

\renewcommand{\thefootnote}{\arabic{footnote}} 

\begin{abstract}
A matrix is given in ``shredded'' form if we are presented with the multiset of rows and the multiset of columns,
but not told which row is which or which column is which. The matrix is reconstructible if it is uniquely
determined by this information. Let $M$ be a random binary $n\times n$ matrix,
where each entry independently is $1$ with probability $p=p(n)\le\frac12$. Atamanchuk, Devroye and Vicenzo introduced the
problem and showed that $M$ is reconstructible with high probability for $p\ge (2+\eps)\frac{1}{n}\log n$.
Here we find that the sharp threshold for reconstructibility is at $p\sim\frac{1}{2n}\log n$.
\end{abstract}

\section{Introduction}

Let $M$ be an $n\times n$ matrix with entries all either $0$ or~$1$ and let $\cR$ and $\cC$
be the collections (multisets) of the $n$ binary strings of length $n$ representing
the rows and columns of $M$ respectively. When is it possible to reconstruct $M$ just
from the knowledge of $\cR$ and~$\cC$? 

The matrix $M$ is \defn{reconstructible} (or \defn{weakly reconstructible})
if $M$ is uniquely determined by the multisets $\cR$ and $\cC$ of its rows and columns.
We say $M$ is \defn{strongly reconstructible} if the positions of all rows and columns
are determined by $\cR$ and~$\cC$, that is, for each row $r=(r_1,\dotsc,r_n)\in \cR$
we can determine a unique $i$ such that $M_{i,j}=r_j$ for all~$j$, and similarly for columns.
Clearly a weakly reconstructible matrix $M$ is strongly reconstructible if and only if there are
no two identical rows and no two identical columns, i.e., $\cR$ and $\cC$ are actually sets.

We study the threshold for reconstructibility when entries of $M$ are random, independently
chosen to be $1$ with probability $p\le \frac12$. Note that corresponding results for $p\ge\frac12$
can be obtained by exchanging 0 and~1; so we will always assume $p\le \frac12$.
In \cite{atamanchuk2023algorithm}, Atamanchuk, Devroye and Vicenzo showed that for any
$\eps>0$, $M$ is strongly reconstructible with high probability whenever
$(2+\eps)\frac1n\log n\le p \le \frac{1}{2}$. (We say that an event $E$ happens
\defn{with high probability} (\defn{w.h.p.}) if $\bP(E) \to 1$ as $n\to\infty$.)

As mentioned above, a simple obstruction to strong reconstructibility is when two rows or two columns are equal,
and $p\sim\frac{1}{n}\log n$ is a sharp threshold for such an event (see \Cref{lem:equalrows}).
Here we show that this is the main obstacle, locating the threshold for strong reconstructibility
at $p \sim\frac{1}{n}\log n$. In fact, we show a stronger statement. We prove that the threshold for
(weak) reconstructibility is $p\sim\frac{1}{2n}\log n$, so above this value of~$p$, with high probability,
the only likely obstruction to strong reconstructibility is the presence of duplicate rows and/or columns. 
Moreover, we also identify the main obstacle to weak reconstructibility as a pair of 1s in~$M$,
each of which is the only $1$ in its row and column. The threshold $p\sim\frac{1}{2n}\log n$
for weak reconstructibility is simply the threshold for the disappearance of these obstacles
(see \Cref{lem:two1s}).

Our main result is the following. 

\begin{theorem}\label{thm:main}
 Suppose that\/ $p=\frac{1}{2n}(\log n+\log\log n+c_n)\le \frac12$.
 \begin{itemize}
 \item[(a)] If\/ $c_n\to\infty$ as $n\to\infty$, then with high probability
  $M$ is reconstructible from the collection of its rows and columns.
 \item[(b)] If\/ $c_n\to-\infty$ as $n\to\infty$, and assuming that\/ $M$ has at least two $1$'s,
  then with high probability $M$ is not reconstructible from the collection of its rows and columns.
 \item[(c)] If\/ $c_n\to c$ as $n\to\infty$, then the probability that\/ $M$ is reconstructible tends
  to an explicit constant depending on $c$ that is strictly between $0$ and\/~$1$.
 \end{itemize}
\end{theorem}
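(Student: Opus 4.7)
The plan is to characterize non-reconstructibility by the existence of a non-trivial ``automorphism'' of $M$, and then use moment methods anchored on the ``isolated $1$s'' obstruction. Call $(\sigma,\tau)\in S_n\times S_n$ an \emph{automorphism} of $M$ if $M_{\sigma(i),\tau(j)}=M_{i,j}$ for all $i,j$, and \emph{non-trivial} if in addition $\sigma$ sends some row of $M$ to a different row vector (equivalently, $P_\sigma M\ne M$). A short check shows that $M$ is not reconstructible iff it admits a non-trivial automorphism. The prototype is a pair of \emph{isolated $1$s} -- entries $M_{i_1,j_1}=M_{i_2,j_2}=1$ that are each the unique $1$ in their row and column -- which give the non-trivial automorphism $((i_1\,i_2),(j_1\,j_2))$. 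Part (b) follows at once: when $c_n\to-\infty$, the expected number of isolated $1$s is $n^2p(1-p)^{2(n-1)}\sim\tfrac12 e^{-c_n}\to\infty$, so \Cref{lem:two1s} yields at least two isolated $1$s w.h.p., and $M$ is non-reconstructible w.h.p.

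For part (a), a first-moment bound shows there are no isolated $1$s w.h.p.; the substantive work is to rule out every other non-trivial automorphism. I would union-bound over $(\sigma,\tau)$ stratified by the row support $I=\{i:\sigma(i)\ne i\}$ and column support $J=\{j:\tau(j)\ne j\}$. If $(\sigma,\tau)$ is an automorphism, then for each row $i\notin I$ the restriction $(M_{i,j})_{j\in J}$ must be constant on every $\tau$-orbit, and symmetrically for columns outside $J$; aggregating these ``outside'' constraints gives a probability factor of order $n^{-\Omega(|I|+|J|)}$. This is not quite enough by itself to beat the $\sim n^{|I|+|J|}$ combinatorial count, so one must also use the ``inside'' non-triviality condition (the action of $(\sigma,\tau)$ on $I\times J$ is not constant on rows), which costs an extra $\sim p^{\min(|I|,|J|)}$. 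The critical case is $|I|=|J|=2$: here the conditions force an anti-diagonal pattern on the $2\times 2$ block $I\times J$ together with matching tails on the other $n-2$ rows and columns, and non-triviality requires the block plus tails to be non-constant. Each additional $1$ in the tails costs a factor of $np^2=O((\log n)^2/n)\to 0$, so the only non-decaying contribution is the two-isolated-$1$s configuration, already ruled out. Automorphisms arising purely from coincidentally identical rows or columns are handled separately via \Cref{lem:equalrows}.

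Part (c) then follows from a Chen--Stein Poisson approximation applied to the count of isolated $1$s: because the events ``$(i,j)$ is an isolated $1$'' for different positions are weakly dependent (any two share at most one row or column), the count converges in distribution to $\Po(\lambda)$ with $\lambda=\tfrac12 e^{-c}$. Combined with part (a), the probability of reconstructibility differs from $\bP[\text{at most one isolated }1]$ by $o(1)$, yielding the explicit limit $(1+\lambda)e^{-\lambda}\in(0,1)$. The main obstacle throughout will be the $|I|=|J|=2$ case of part (a): the combinatorial factor $\binom{n}{2}^2=\Theta(n^4)$ is large enough that neither the outside constraints alone nor the anti-diagonal inside constraint alone suffices, so one has to combine them carefully and simultaneously peel off the isolated-$1$s contribution that is already treated.
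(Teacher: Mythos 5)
Your reduction of non-reconstructibility to the existence of a non-trivial pair $(\sigma,\tau)$ with $M_{\sigma(i),\tau(j)}=M_{i,j}$ is correct, and this is a genuinely different route from the paper's (which first pins down almost all rows and columns via iterated degree statistics and then completes the reconstruction). But the union bound as you describe it does not close. First, at $p\sim\frac{1}{2n}\log n$ the matrix w.h.p.\ contains $\Theta(\sqrt{n/\log n})$ all-zero rows and columns, so any non-trivial automorphism can be padded by transpositions of such pairs, and the unrestricted sum over $(\sigma,\tau)$ diverges: already for $|I|=|J|=4$ with $\sigma,\tau$ products of two transpositions, the count $\Theta(n^8)$ times the outside factor $e^{-8np(1+o(1))}=n^{-4}(\log n)^{-4}e^{-4c_n+o(1)}$ times your inside factor $p^2$ is $\Theta(n^2(\log n)^{-2}e^{-4c_n})\to\infty$. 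You must first reduce to automorphisms in which \emph{every} row of $I$ and every column of $J$ is moved as a vector (a valid reduction, but a necessary step you omit); it is this reduction that forces at least one $1$ of the block $I\times J$ into every supported row and column, giving an inside cost of $p^{\max(|I|,|J|)}$ rather than your claimed $p^{\min(|I|,|J|)}$, which is in any case too weak on its own (for $|I|=2$, $|J|=4$ it gives $n^6\cdot e^{-6np}\cdot p^2=\Theta(n(\log n)^{-1}e^{-3c_n})\to\infty$). Second, even with cost $p^{\max(a,b)}$, every diagonal case $|I|=|J|=k$ contributes expected count $\Theta(e^{-kc})$, not $o(1)$; these terms are not slack but genuine configurations, namely $k$-tuples of isolated $1$s (conditioned on the outside constancy events, each column outside $J$ is all-$1$ on $I$ with probability only about $p^k$, so w.h.p.\ the rows of $I$ vanish off $J$ and the configuration degenerates to $k$ isolated $1$s). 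So the peeling-off of the isolated-$1$s contribution that you flag for $|I|=|J|=2$ must be carried out for every $k$, and your assertion that $|I|=|J|=2$ is the only delicate case is wrong.

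A second, independent gap is that supports growing with $n$ are not addressed, and there the first-moment method fails outright: for $\sigma$ and $\tau$ both $n$-cycles there are $((n-1)!)^2=n^{2n-o(n)}$ pairs, each an automorphism with probability $\big((1-p)^n+p^n\big)^n=n^{-n/2+o(n)}$, so the expected count is $n^{3n/2+o(n)}$. Some structural input is needed to exclude large supports --- essentially the content of the paper's first phase (\Cref{lem:2nddegreestatistics}), which shows almost every row and column has a locally unique signature and hence must be fixed by any automorphism. Without such an argument your outline treats only bounded supports. Parts (b) and (c) are fine modulo these issues (your Chen--Stein step can be replaced by \Cref{lem:two1s}, which already gives the Poisson limit by the method of moments), but as it stands part (a), and hence the upper half of part (c), has real gaps.
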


The proof of \Cref{thm:main} runs in two phases. 
We begin by knowing the ``row values''~$\cR$ and ``column values''~$\cC$, but not where the rows and columns are placed.
In the first phase, we attempt to
assign most of the rows and columns to the correct ``row position'' or ``column position'' using ``local'' information.
It is helpful here to consider the matrix as the adjacency matrix of a bipartite graph~$G$ with bipartition
$(\cI,\cJ)$, where the vertex class~$\cI$ corresponds to the indices of the rows of~$M$, the vertex class
$\cJ$ corresponds to the indices of the columns of~$M$, and the edges correspond to the 1s in the matrix.
Hence for $i\in\cI$, $j\in\cJ$,
$ij$ is an edge if and only if $M_{i,j}=1$. (We assume $\cI$ and $\cJ$ are disjoint, but both have natural bijections
to~$[n]\coloneqq\{1,2,\dotsc,n\}$.) We deduce information about the local structure of $G$ in two different ways:
\begin{itemize}
 \item For row values $r\in\cR$, we deduce the structure of the ball of radius 3 around $r$ by looking
  at the multiset $\cR$ of rows of $M$, which gives the adjacencies in $G$ between the row values in $\cR$
  and the column indices (positions) in~$\cJ$.
 \item For row positions $i\in\cI$, we deduce the structure of the ball of radius 3 around the $i$th row of $M$
  by looking at the multiset $\cC$ of columns of $M$, which gives the adjacencies between the row
  indices in $\cI$ and the column values in~$\cC$.
\end{itemize}
We complete the first part of the argument by matching up most row values with their row positions
and most column values with their column positions. To do this we show that with high probability
(for $p>\frac{\delta}{n}\log n$ with any $\delta>0$), the ball of radius~3 is enough to uniquely
identify most vertices of~$G$. Here we use a method similar to Johnston, Kronenberg, Roberts,
and Scott~\cite{johnston2023shotgun} for reconstructing Erd\H{o}s-Renyi random graphs.

Once we have identified the correct position for most rows and columns, we show in the second phase that we
now have enough information to fill in the remaining rows and columns with high probability
unless certain substructures occur, and identify the thresholds for these substructures occurring.

In~\cite{atamanchuk2023algorithm}, Atamanchuk, Devroye and Vicenzo also proved that for $p\ge \frac{(16+\eps)\log^2n}{n(\log\log n)^2}$
there is an algorithm that succeeds in producing a strong reconstruction of the matrix in $O(n^2)$ time
with high probability and in expectation.
Our proof gives an algorithm that also produces the matrix in $O(n^2)$ time with high
probability above the threshold for weak reconstructibility. See \Cref{re:alg} for more details.

As mentioned above, in light of \Cref{lem:equalrows}, we obtain the threshold for strong
reconstructibility as a simple corollary of \Cref{thm:main}.

\begin{corollary}\label{cor:StrongRec}
 Suppose that\/ $p=\frac{1}{n}(\log n+c_n)\le \frac12$.
 \begin{itemize}
  \item[(a)] If\/ $c_n\to\infty$ as $n\to\infty$, then with high probability,
   $M$ is strongly reconstructible from the collection of its rows and columns.   
  \item[(b)] If\/ $c_n\to-\infty$ as $n\to\infty$, then with high probability,
   $M$ is not strongly reconstructible from the collection of its rows and columns.   
  \item[(c)] If\/ $c_n\to c$ as $n\to\infty$, then the probability that\/ $M$ is strongly
   reconstructible tends to an explicit constant depending on $c$ that is strictly between $0$ and\/~$1$.
 \end{itemize}
\end{corollary}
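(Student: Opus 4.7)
The plan is to derive \Cref{cor:StrongRec} as a short consequence of \Cref{thm:main} combined with \Cref{lem:equalrows}. The bridge is the observation already recorded in the introduction: $M$ is strongly reconstructible if and only if $M$ is weakly reconstructible and the rows and the columns of $M$ are each pairwise distinct. Writing $W$ for the event that $M$ is weakly reconstructible and $D$ for the event that $\cR$ or $\cC$ contains a repeated element, the strong-reconstructibility event is exactly $W \cap \overline{D}$.

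Part~(b) is then immediate: if $c_n \to -\infty$, then \Cref{lem:equalrows} gives $\bP(D) \to 1$, and the existence of two identical rows or two identical columns already prevents strong reconstructibility, regardless of whether $M$ is weakly reconstructible. Hence $\bP(M \text{ strongly reconstructible}) \to 0$.

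For parts~(a) and~(c) I would first reduce to the event $\overline{D}$. Rewriting $p = \frac{1}{n}(\log n + c_n)$ as $\frac{1}{2n}(\log n + \log\log n + c_n')$ with $c_n' = \log n + 2c_n - \log\log n$, in both regimes ($c_n \to \infty$ or $c_n \to c$ finite) one has $c_n' \to \infty$, so \Cref{thm:main}(a) gives $\bP(W) \to 1$. Consequently
\[
\bP(M \text{ strongly reconstructible}) = \bP(\overline{D}) + o(1).
\]
Part~(a) then follows since $c_n \to \infty$ makes $\bP(\overline{D}) \to 1$ by \Cref{lem:equalrows}, and part~(c) follows since $c_n \to c$ makes $\bP(\overline{D}) \to e^{-\mu(c)}$ for an explicit constant $\mu(c) \in (0,\infty)$ coming from the Poisson limit (recorded in \Cref{lem:equalrows}) for the number of pairs of duplicate rows plus pairs of duplicate columns, yielding a limit strictly inside $(0,1)$.

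There is essentially no hard step here; the corollary really is a direct assembly of the two earlier results. The only small verification is the rewriting of $p$ with $c_n' \to \infty$ so that \Cref{thm:main}(a) can be invoked (noting that in all relevant regimes $c_n$ is dominated by $\tfrac12\log n$ since we need $p \ge 0$), together with minor bookkeeping to read off the Poisson constant $\mu(c)$ in part~(c) from the statement of \Cref{lem:equalrows}.
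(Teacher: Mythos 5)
Your proof is correct and follows exactly the route the paper intends (the paper states the corollary as an immediate consequence of \Cref{thm:main} and \Cref{lem:equalrows} without writing out details): strong reconstructibility is equivalent to weak reconstructibility together with the absence of duplicate rows and columns, the rewriting $c_n'=\log n+2c_n-\log\log n\to\infty$ lets you invoke \Cref{thm:main}(a) in cases (a) and (c), and \Cref{lem:equalrows} supplies the limiting constant $\bigl((1+e^{-c})e^{-e^{-c}}\bigr)^2$ in part (c). The only blemish is the throwaway remark that $p\ge0$ forces $c_n\le\tfrac12\log n$ (it actually gives $c_n\ge-\log n$), but nothing in the argument depends on it.
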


\subsection{Discussion and related results}

Reconstruction of binary matrices is closely connected to graph reconstruction. 

\textbf{Reconstruction of bipartite graphs.}  
We note that every binary matrix can be viewed
as a bipartite graph where one part acts as the rows of the graph, and the other as the columns.
There is an edge $ij$ in this graph if there is 1 at the $(i,j)$ entry of the matrix.
Thus, reconstruction of a binary matrix by the collection of its rows and columns, can be achieved
by the reconstruction of the corresponding balanced bipartite graph from the collection of its $1$-balls,
where the centred vertex is unlabelled, but the other vertices are labelled.
We will use this connection in our proofs. Our main theorem thus also says the following.

\begin{corollary}
 Suppose that\/ $p=\frac{1}{2n}(\log n+\log\log n+c_n)\le \frac12$ and let\/ $G$ be
 a random subgraph of\/ $K_{n,n}$ obtained by keeping each edge independently with probability~$p$.
 \begin{itemize}
 \item[(a)] If\/ $c_n\to\infty$ as $n\to\infty$, then with high probability
  $G$ is reconstructible from the collection of its $1$-balls with unlabelled centres.
 \item[(b)] If\/ $c_n\to-\infty$ as $n\to\infty$, and assuming that\/ $G$ has at least two edges,
  then with high probability $M$ is not reconstructible from the collection of its $1$-balls with unlabelled centres.
 \item[(c)] If\/ $c_n\to c$ as $n\to\infty$, then the probability that\/ $G$ is reconstructible
  from the collection of its $1$-balls with unlabelled centres tends
  to an explicit constant depending on $c$ that is strictly between $0$ and\/~$1$.
 \end{itemize}
\end{corollary}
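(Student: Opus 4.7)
The plan is to show that the corollary is essentially a restatement of \Cref{thm:main} under the natural bijection between binary matrices and spanning subgraphs of $K_{n,n}$, and then to invoke that theorem directly.

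First I would fix the identification. Given $G\subseteq K_{n,n}$ with bipartition $(\cI,\cJ)$ and each part identified with $[n]$, define $M\in\{0,1\}^{n\times n}$ by $M_{i,j}=1$ iff $ij\in E(G)$. Under this map, keeping each edge of $K_{n,n}$ independently with probability $p$ corresponds exactly to taking each entry of $M$ to be $1$ independently with probability $p$, so the hypothesis on $p$ is identical in the two settings.

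Next I would verify that the multiset of $1$-balls of $G$ with unlabelled centres carries exactly the same information as the pair $(\cR,\cC)$ of row- and column-multisets of $M$. For $i\in\cI$, the $1$-ball at $i$ is the star with centre $i$ and leaves $N_G(i)\subseteq\cJ$; forgetting the centre label $i$ leaves the labelled set $N_G(i)\subseteq\cJ$, which viewed as a $\{0,1\}$-vector indexed by $\cJ$ is precisely the $i$th row of $M$. Because the leaves of such a star all lie in a single bipartition class, we can tell from the unlabelled-centre ball which class its centre belonged to, so the multiset of all unlabelled-centre $1$-balls splits canonically into $\cR$ (centres in $\cI$) and $\cC$ (centres in $\cJ$). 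Consequently $G$ is reconstructible from its $1$-balls with unlabelled centres if and only if $M$ is reconstructible from $(\cR,\cC)$, and the hypothesis that $G$ has at least two edges is identical to the hypothesis that $M$ has at least two $1$s.

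Finally, applying parts (a), (b), (c) of \Cref{thm:main} to $M$ yields the corresponding three clauses for $G$. The argument is a bookkeeping translation with no real obstacle; the only point that needs care is the observation that the centre's bipartition class can be recovered from the star of its neighbours, so that the pair $(\cR,\cC)$ (and not merely its unordered union) is encoded by the unlabelled-centre $1$-balls.
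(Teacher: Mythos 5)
Your proposal is correct and matches the paper's (implicit) argument exactly: the corollary is stated as an immediate consequence of \Cref{thm:main} via the dictionary between binary matrices and spanning subgraphs of $K_{n,n}$ set out just before it, which is precisely the translation you carry out. One small caveat to your claim that the centre's class is always visible from its ball: an isolated centre gives an empty, classless ball, but since $|\cI|=|\cJ|=n$ and all non-empty balls reveal their class, the number of zero rows and zero columns is still forced, so the pair $(\cR,\cC)$ is indeed recovered and the equivalence stands.
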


\textbf{Reconstruction of directed graphs.} 
Every $n\times n$ binary matrix is equivalent to a directed graph on $n$ vertices,
where a directed edge $\vec{ij}$ appears if and only if there is 1 in the $(i,j)$ entry
of the matrix (possibly with self-loops). Thus, reconstruction of a binary matrix by the collection
of its rows and columns, is equivalent to the reconstruction of the a directed graph by the collection
of its in- and out- neighbourhoods, where the centre vertex is unlabelled, but the other vertices
are labelled. As above, it is straightforward to write down a corollary of \cref{thm:main} for random directed graphs. 

\textbf{Reconstruction of (random) graphs.}
There is a huge literature on graph reconstruction, and a growing body of work on
reconstructing random graphs and other random combinatorial structures.
Kelly and Ulam conjectured in 1941 that every finite simple graph on at
least $3$ vertices can be determined (up to isomorphism) by its collection of vertex-deleted subgraphs,
that is the multiset $\{G\setminus\{v\} \colon v\in V(G)\}$ 
of subgraphs of $G$ obtaining by deleting one vertex each time
\cite{kelly1942isometric,ulam1960collection}.
The Reconstruction Conjecture has a long history and was proved in some special cases, but the general statement is still open
(see e.g., \cite{bondy1977graph,bondy1991graph,asciak2010survey,lauri2016topics} for surveys and background). 

M\"uller~\cite{muller1976probabilistic} proved in 1976 that the Reconstruction Conjecture holds for almost all graphs,
in the sense that it holds with high probability for the binomial random graph $G(n,\frac12)$.
Bollob\'as~\cite{bollobas1990almost} subsequently showed a much stronger result:
in fact, with high probability, the graph can be reconstructed from any three of the subgraphs $G\setminus v$. This led to the
understanding that for the reconstruction of random combinatorial objects, much less information is needed.  

A significant line of recent research looks at when graphs can be reconstructed from ``local'' information.  
Mossel and Ross \cite{mossel2017shotgun} introduced  the ``shotgun reconstruction'' problem.
The terminology was motivated by the shotgun assembly problem for DNA sequences, where the goal
is to reconstruct a DNA sequence from random local ``reads'', corresponding to short subsequences 
(see \cite{dyer1994probability,arratia1996poisson,motahari2013information} among many references).
In the context of graphs, the goal is to reconstruct the graph from balls of small radius.
For a graph $G$ and a vertex $v$ of~$G$, let $N_G^r(v)$ be the induced graph of the vertices of
distance at most $r$ from~$v$ (where only $v$ is labelled).
Then $G$ is \defn{$r$-reconstructible} if every graph with the same multiset of
$r$-balls as $G$ is isomorphic to~$G$. In other words, $G$ can be
identified up to isomorphism from the multiset $\{N_G^r(v)\colon v\in V(G)\}$. 

Mossel and Ross \cite{mossel2017shotgun} proved  that if $p=\frac{\lambda}{n}$, for $\lambda>1$,
then $r=\Theta(\log n)$ is enough for $r$-reconstruction of $G(n,p)$ with high probability.
Sharp asymptotics was obtained by Ding, Jiang, and Ma in 2021~\cite{ding2022shotgun}. 
Mossel and Ross also looked at the problem of reconstructing from balls of constant radius.
They showed that if $np/\log^2n\to \infty$, then $r=3$ is enough for $G(n,p)$
with high probability.  This was later improved by Gaudio and Mossel~\cite{gaudio2020shotgun} who also
obtained bounds on $p$ for the cases $r=1$ and~$2$. The case $r=1$ was improved by Huang and
Tikhomorov~\cite{huang2021shotgun}, who showed that there is a phase transition in 1-reconstructibility
around $p=n^{-1/2}$, where the upper and lower bounds differ by a polylogarithmic factor.
In \cite{johnston2023shotgun}, this problem was settled for $r\ge 3$, and improved bounds were obtained
for $r=1$ and~$2$.

There is also work on other graph models including random regular graphs~\cite{mossel2015shotgun},
random geometric graphs \cite{adhikari2022geometric} and random simplicial complexes \cite{adhikari2022shotgun}.
 
Going back to reconstruction of matrices, one may see this study as the complement of the previous one.
Here, the subgraphs given have all vertices labelled except of the centre, while in the Mossel-Ross type
of random graph reconstruction the only labelled vertex in each given subgraph is the middle one.
As we will see later, the fact that in the matrix case the neighbourhood is labelled, will allow us to
obtain information on larger and larger balls, and by that to apply some of the techniques that were
presented in \cite{johnston2023shotgun} for $r$-reconstruction, where $r\ge 3$. In this case, however,
the notion of $r$-neighbourhoods is slightly different, as all vertices at odd distance from the root
of the ball are labelled but all vertices at even distance are not.
But the informative structure of the matrix does allow us to obtain statistics on the
vertices of distance at most $r$ of each vertex, which will be sufficient for reconstruction. 

\subsection{Organisation}

The paper is organised as follows. In \Cref{sec:prelim} we give classical probabilistic results and 
thresholds for the substructures that provide obstacles to either weak or strong reconstructibility.
In \Cref{sec:most} we start the proof of \Cref{thm:main} by showing how to reconstruct almost all rows and columns.
In \Cref{sec:all} we show that we can complete this into a full reconstruction unless some
specific substructures occur, and complete the proof of \Cref{thm:main}. Finally in \Cref{sec:alg}
we show that reconstruction can be achieved w.h.p.\ in $O(n^2)$ time.

\section{Preliminaries}\label{sec:prelim}

In this section we state probabilistic bounds which will be useful later in the paper.

We make frequent use of the following well-known bounds on the tails of the binomial distribution,
known as Chernoff bounds (see e.g.,~\cite{mitzenmacher2017probability}, Theorem~4.4).

\begin{lemma}\label{lem:chernoff}
 Let\/ $0<p\le\frac12$, $X\sim\Bin(n,p)$ and\/ $\eps>0$. Then,
 \begin{align*}
  \bP\big(X \ge (1+\eps)np\big) & \le \exp \Big(-\tfrac{\eps^2 np}{2+\eps} \Big),\\
  \bP\big(X \le (1-\eps)np\big) & \le \exp \Big(-\tfrac{\eps^2 np}{2} \Big).
 \end{align*}
\end{lemma}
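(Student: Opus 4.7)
The plan is to use the standard moment generating function (Chernoff) method. Writing $X=X_1+\dotsb+X_n$ where the $X_i$ are i.i.d.\ Bernoulli$(p)$, for any $t>0$ Markov's inequality gives
\[
\bP(X\ge (1+\eps)np) \le e^{-t(1+\eps)np}\,\bE[e^{tX}] = e^{-t(1+\eps)np}\bigl(1-p+pe^t\bigr)^n.
\]
Using $1-p+pe^t \le \exp(p(e^t-1))$ yields $\bP(X\ge (1+\eps)np)\le \exp\bigl(np(e^t-1)-t(1+\eps)np\bigr)$, and optimising in $t$ (the minimum is at $t=\log(1+\eps)>0$) produces
\[
\bP(X\ge(1+\eps)np)\le \exp\bigl(-np\,\varphi(\eps)\bigr),\qquad \varphi(\eps)=(1+\eps)\log(1+\eps)-\eps.
\]
For the lower tail the argument is symmetric: use $t<0$ (equivalently, apply the same computation to $-X$ or to $n-X\sim\Bin(n,1-p)$), optimise at $t=\log(1-\eps)<0$, and obtain $\bP(X\le(1-\eps)np)\le\exp\bigl(-np\,\psi(\eps)\bigr)$ with $\psi(\eps)=(1-\eps)\log(1-\eps)+\eps$ for $0<\eps<1$ (the case $\eps\ge 1$ is trivial since $X\ge 0$).

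The remaining task is analytic: replace the transcendental rate functions $\varphi$ and $\psi$ by the simple quadratic bounds claimed in the lemma. First I would prove $\varphi(\eps)\ge \tfrac{\eps^2}{2+\eps}$ for all $\eps>0$. Setting $f(\eps)=\varphi(\eps)-\tfrac{\eps^2}{2+\eps}$, one checks $f(0)=0$ and computes $f'(\eps)=\log(1+\eps)-\tfrac{\eps(\eps+4)}{(\eps+2)^2}$; a further differentiation shows $f''(\eps)\ge 0$ (equivalently $(2+\eps)^3\ge 8(1+\eps)$, which expands to $\eps^3+6\eps^2+4\eps\ge 0$), so $f'$ is increasing with $f'(0)=0$, hence $f\ge 0$. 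Next I would prove $\psi(\eps)\ge \tfrac{\eps^2}{2}$ for $0<\eps<1$: setting $g(\eps)=\psi(\eps)-\tfrac{\eps^2}{2}$, one has $g(0)=0$, $g'(\eps)=-\log(1-\eps)-\eps$, and $g''(\eps)=\tfrac{1}{1-\eps}-1=\tfrac{\eps}{1-\eps}\ge 0$, so again $g'$ is nonnegative and $g\ge 0$.

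Combining the two tail bounds with these inequalities gives exactly the two stated estimates. The only non-routine step is the analytic comparison of $\varphi$ and $\psi$ with the quadratic expressions; everything else is a textbook application of the exponential Markov inequality, and the hypothesis $p\le \tfrac12$ is not actually needed for the argument (the lemma is standard for all $p\in(0,1)$). Since this is a classical result, in a full write-up I would simply cite Mitzenmacher--Upfal~\cite{mitzenmacher2017probability} rather than redo the calculation.
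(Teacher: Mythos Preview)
Your argument is correct: the MGF bound followed by the elementary inequalities $\varphi(\eps)\ge\eps^2/(2+\eps)$ and $\psi(\eps)\ge\eps^2/2$ is exactly the standard derivation, and your calculus checks are sound. The paper itself does not prove this lemma at all---it simply cites \cite{mitzenmacher2017probability}, Theorem~4.4---so your closing remark that in a full write-up you would cite rather than redo the calculation is precisely what the paper does.
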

We will also be interested in tail bounds for binomial distributions where
$np \to 0$ as $n \to \infty$, for which we use the following simple observation.

\begin{lemma}\label{lem:small-mean}
 Let\/ $X \sim \Bin(n,p)$ and $k\in\bN$. Then
 \[
  \bP{(X \ge k)} \le \tbinom{n}{k}p^k\ .
 \]
\end{lemma}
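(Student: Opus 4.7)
The plan is to use a straightforward union bound argument, since the inequality is essentially a first-moment estimate on the number of $k$-element "success patterns". Write $X=\sum_{i=1}^n X_i$ where the $X_i$ are independent Bernoulli$(p)$ random variables. The event $\{X\ge k\}$ is precisely the event that there exists some subset $S\subseteq[n]$ with $|S|=k$ for which $X_i=1$ for all $i\in S$. For each fixed such $S$, independence gives $\bP(X_i=1\text{ for all }i\in S)=p^k$. Taking a union bound over the $\binom{n}{k}$ subsets of size $k$ then immediately yields
\[
 \bP(X\ge k)\ \le\ \sum_{\substack{S\subseteq[n]\\|S|=k}}p^k\ =\ \tbinom{n}{k}p^k.
\]

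Equivalently, and perhaps more elegantly, one can phrase this as a first-moment / Markov argument: by linearity of expectation, counting $k$-subsets of successful trials gives $\bE\bigl[\binom{X}{k}\bigr]=\binom{n}{k}p^k$, and since $\binom{X}{k}$ is a nonnegative integer that is at least $1$ whenever $X\ge k$, one obtains $\bP(X\ge k)\le \bE\bigl[\binom{X}{k}\bigr]=\binom{n}{k}p^k$. There is no real obstacle here: the statement is an elementary fact about binomial tails, and indeed the bound holds even when the $X_i$ are only assumed to be negatively correlated, but for our application independence is the case of interest.
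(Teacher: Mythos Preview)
Your proof is correct and is essentially the same as the paper's: the paper's one-line argument bounds $\bP(X\ge k)$ by the expected number of $k$-subsets of successful trials, which is exactly your union bound / first-moment observation.
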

\begin{proof}
Indeed, $\bP(X\ge k)$ is at most the expected number of $k$-tuples of trials
that all succeed, which is $\binom{n}{k}p^k$.
\end{proof}

We now quote the following result on convergence to a Poisson distribution which follows
directly from \cite[Theorem~1.23]{bollobas2001random}.
Here we use the \defn{falling factorial} notation $(n)_r:=n(n-1)\dotsb(n-r+1)$.

\begin{lemma}\label{lem:poisson} 
 Let\/ $\lambda_1,\dotsc,\lambda_k$ be non-negative reals and\/ $X^{(i)}_n$, $i=1,\dotsc,k$,
 be sequences of random variables such that for all\/
 $k$-tuples $(r_1,\dotsc,r_k)$ of non-negative integers,
 \[
  \bE\big[(X^{(1)}_n)_{r_1}\dotsb (X^{(k)}_n)_{r_k}\big]\to\lambda_1^{r_1}\dotsb\lambda_k^{r_k}
  \qquad\text{as }n\to\infty,
 \]
 Then $(X^{(1)}_n,\dotsc,X^{(k)}_n)$ converges jointly in distribution to
 independent Poisson random variables with means $\lambda_1,\dotsc,\lambda_k$.
 Namely, for any non-negative integers $s_1,\dotsc, s_k$,
 \[
  \lim_{n\to\infty}\bP\big(X_n^{(1)}=s_1,\dotsc, X_n^{(k)}=s_k\big)
  = \prod_{i=1}^k \frac{e^{-\lambda_i} \lambda_i^{s_i}}{s_i!}\ .
 \]
\end{lemma}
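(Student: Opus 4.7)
My plan is to prove this via joint probability generating functions (PGFs), using the facts that the joint PGF of $k$ independent Poisson random variables factors as $\prod_{i=1}^k e^{\lambda_i(z_i-1)}$ and that pointwise convergence of joint PGFs on a neighbourhood of $\mathbf{1}$ implies joint convergence in distribution for $\bN$-valued random vectors.

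The starting point is the identity $z^x = \sum_{r\ge 0}\binom{x}{r}(z-1)^r$ valid for $x\in\bN$. Applying this in each coordinate and taking expectations gives
\[
 \bE\Bigl[\prod_{i=1}^k z_i^{X^{(i)}_n}\Bigr] = \sum_{r_1,\dotsc,r_k\ge 0}\frac{\bE\bigl[\prod_i (X^{(i)}_n)_{r_i}\bigr]}{\prod_i r_i!}\prod_i (z_i-1)^{r_i}.
\]
Under the hypothesis, each coefficient on the right converges to $\prod_i \lambda_i^{r_i}/\prod_i r_i!$. So, provided the limit can be taken inside the sum, the left-hand side converges to $\prod_i \sum_{r_i\ge 0}\lambda_i^{r_i}(z_i-1)^{r_i}/r_i! = \prod_i e^{\lambda_i(z_i-1)}$, which is the joint PGF of $k$ independent $\Po(\lambda_i)$ random variables. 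A standard continuity theorem for PGFs then converts this into joint convergence in distribution, which in particular yields the pointwise convergence of the joint PMF claimed in the lemma.

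The main obstacle is justifying the interchange of limit and sum. I would argue by truncation: Markov's inequality applied to the non-negative random variable $(X)_R = X(X-1)\dotsb(X-R+1)$ gives
\[
 \bP\bigl(X^{(i)}_n\ge R\bigr)\le \frac{\bE[(X^{(i)}_n)_R]}{R!},
\]
and by hypothesis this tends to $\lambda_i^R/R!$, so it is uniformly bounded in $n$ by a quantity that goes to $0$ as $R\to\infty$. Hence, for $\mathbf{z}$ in a fixed small neighbourhood of $\mathbf{1}$, the contribution to the PGF series from tuples $(r_1,\dotsc,r_k)$ with some $r_i\ge R$ is small uniformly in $n$, while for each fixed tuple the hypothesis gives termwise convergence. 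Letting $R\to\infty$ yields the desired limit for the joint PGF. As the authors note, the entire argument is essentially the content of \cite[Theorem~1.23]{bollobas2001random}, which handles the univariate case directly; the joint statement follows by combining that theorem with the product factorisation of the Poisson joint PGF.
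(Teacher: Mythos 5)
The paper does not actually prove this lemma: it is quoted as following ``directly'' from \cite[Theorem~1.23]{bollobas2001random}, whose proof goes through the inclusion--exclusion identity $\bP(X=s)=\sum_{r\ge s}(-1)^{r-s}\binom{r}{s}\,\bE[(X)_r]/r!$ together with the Bonferroni inequalities, which sandwich $\bP(X_n=s)$ between alternating partial sums and therefore let one pass to the limit using only the convergence of each individual factorial moment. Your PGF route is reasonable in outline, but the step you yourself identify as ``the main obstacle'' is exactly where the argument breaks, and the fix you propose does not work. The Markov bound $\bP(X_n^{(i)}\ge R)\le\bE[(X_n^{(i)})_R]/R!$ controls the tail \emph{probability}, whereas what you need to control is the tail of the \emph{series}, $\sum_{r\ge R}\bE[(X_n)_r]\,|z-1|^r/r!$, and smallness of the former does not imply smallness of the latter. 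The hypothesis gives $\bE[(X_n)_r]\to\lambda^r$ only pointwise in $r$; it supplies no bound on $\bE[(X_n)_r]$ that is uniform over $r$ for a fixed $n$. Indeed one can construct sequences satisfying the hypothesis (place a rapidly vanishing amount of mass very far out in the tail, at a distance growing with $n$) for which $\bE[(1+\eps)^{X_n}]=\sum_r\bE[(X_n)_r]\eps^r/r!=+\infty$ for every $n$ and every $\eps>0$. For such sequences the PGF does not exist for any $z>1$, and for $z\in[0,1)$ the series is not absolutely convergent, so neither the Fubini interchange nor a ``uniformly small tail'' estimate can be carried out as stated.

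The argument can be repaired in two standard ways. Either (a) restrict to $z\in[0,1]^k$ and replace your truncation estimate by the Bonferroni-type sandwich $\sum_{r=0}^{2t+1}\binom{x}{r}(z-1)^r\le z^x\le\sum_{r=0}^{2t}\binom{x}{r}(z-1)^r$, valid for all integers $x\ge0$ and $z\in[0,1]$; taking expectations, letting $n\to\infty$ for fixed $t$ using the hypothesis, and then letting $t\to\infty$ gives convergence of the joint PGF on $[0,1]^k$, which suffices --- but this is essentially the Bollob\'as proof transplanted into generating-function language. Or (b) use your Markov bound only for what it actually yields, namely tightness; extract a subsequence converging in distribution; upgrade the hypothesis to convergence of the factorial moments of the limit via uniform integrability of $(X_n)_r$ (which does follow, since $\bE[(X_n)_{r+1}]$ is bounded); and conclude by moment-determinacy of the Poisson distribution (and of products of moment-determinate marginals in the joint case). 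Either way, some input beyond the tail bound you state is required.
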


Recall that $M$ is an $n\times n$ matrix with i.i.d.\ Bernoulli random entries
that are 1 with probability~$p$.
The following lemma gives the probability for the main obstruction to strong reconstructibility,
which is the appearance of two identical rows or two identical columns.

\begin{lemma}\label{lem:equalrows}
 Assume\/ $p=\frac{1}{n}(\log n + c_n)\le\frac12$. Then, in the matrix~$M$,
 \[
  \bP(\exists\text{ two equal rows or two equal columns})\to
  \begin{cases}1,&\text{if\/ }c_n\to-\infty,\\
   1-\big((1+e^{-c})e^{-e^{-c}}\big)^2,&\text{if\/ }c_n\to c,\\
   0,&\text{if\/ }c_n\to\infty.
  \end{cases}
 \]
\end{lemma}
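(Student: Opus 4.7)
The plan is to show that, up to a negligible event, ``two rows of $M$ are equal'' coincides with ``there are at least two all-zero rows'' (and similarly for columns), and then to apply joint Poisson convergence to the number of all-zero rows and columns. Let $I_R$ (resp.\ $I_C$) denote the number of all-zero rows (resp.\ columns) of $M$, and $D_R$ (resp.\ $D_C$) the number of pairs of equal rows (resp.\ columns) not both all-zero.

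The first estimate is $\bE[D_R]\to 0$ (and symmetrically $\bE[D_C]\to 0$). Two distinct rows are equal with probability $(p^2+(1-p)^2)^n$ and both all-zero with probability $(1-p)^{2n}$, so factoring gives
\[
\bE[D_R]=\binom{n}{2}(1-p)^{2n}\Bigl[\bigl(1+\tfrac{p^2}{(1-p)^2}\bigr)^n-1\Bigr].
\]
When $np^2\le 1$ this expands to $O\bigl(\binom{n}{2}(1-p)^{2n}\cdot np^2\bigr)=O\bigl((\log n+c_n)^2 e^{-2c_n}/n\bigr)$, which vanishes whenever $c_n$ is bounded below; in the complementary regime $np^2>1$ (which forces $np\ge\sqrt{n}/2$), the cruder bound $\bE[D_R]\le\binom{n}{2}(1-p)^n\le\tfrac12 n^2 e^{-np}$ is superpolynomially small.

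Next I would compute the joint factorial moments of $(I_R,I_C)$. Fixing $r$ row indices and $s$ column indices, requiring the corresponding rows and columns to be all-zero forces exactly $rn+sn-rs$ entries of $M$ to vanish, so
\[
\bE\bigl[(I_R)_r(I_C)_s\bigr]=(n)_r(n)_s(1-p)^{rn+sn-rs}.
\]
Assuming $c_n\to c$ and using $n(1-p)^n\to e^{-c}$ together with $(1-p)^{-rs}\to 1$, this converges to $e^{-(r+s)c}$. By \Cref{lem:poisson}, $(I_R,I_C)$ converges jointly in distribution to independent $\Po(e^{-c})$ variables; in particular $\bP(I_R\le 1,\,I_C\le 1)\to\bigl((1+e^{-c})e^{-e^{-c}}\bigr)^2$.

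Combining the two ingredients handles the three cases. For $c_n\to c$, the event ``no two equal rows or columns'' differs from $\{I_R\le 1,\,I_C\le 1\}$ on an event of probability at most $\bE[D_R]+\bE[D_C]=o(1)$, yielding the stated limit. For $c_n\to\infty$, both $\binom{n}{2}(1-p)^{2n}$ and $\bE[D_R]$ tend to $0$, so Markov bounds the probability of two equal rows by $o(1)$, and likewise for columns. For $c_n\to-\infty$, the constraint $p\ge 0$ forces $c_n\ge-\log n$ and hence $np^2\to 0$, so $\bE I_R\sim e^{-c_n}\to\infty$; since $\Var I_R\le\bE I_R$, Chebyshev gives $I_R\ge 2$ w.h.p., already producing two equal all-zero rows. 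The main delicate point is the two-regime estimate of $\bE[D_R]$, which guarantees that nonzero coincidences between rows are negligible compared to the main contribution from all-zero rows; the remainder reduces to standard Poisson convergence and concentration.
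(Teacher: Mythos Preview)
Your proof is correct and follows essentially the same strategy as the paper's: both show that pairs of equal rows that are not both all-zero have vanishing expectation, reduce to counting all-zero rows and columns, and then apply joint Poisson convergence via factorial moments (exactly your computation $\bE[(I_R)_r(I_C)_s]=(n)_r(n)_s(1-p)^{rn+sn-rs}\to e^{-(r+s)c}$). The only cosmetic differences are that the paper bounds $\bE[D_R]$ via the Mean Value Theorem rather than your two-regime split, and handles $c_n\to\pm\infty$ by stochastic monotonicity in $p$ rather than your direct Markov/Chebyshev arguments; both routes are equally valid and of comparable length.
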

\begin{proof}
Let $N$ be the number of pairs of identical rows in~$M$ that are \emph{not} entirely zero.
The probability that two given rows are identical is $((1-p)^2+p^2)^n$ and the probability that
the two rows are identically zero is $(1-p)^{2n}$. Hence,
\begin{align*}
 \bE[N] &= \binom{n}{2}\Big(\big((1-p)^2+p^2\big)^n-\big((1-p)^2\big)^n\Big)\\
 &\le n^2\cdot np^2\cdot \big((1-p)^2+p^2\big)^{n-1}\\
 &\le n^2\cdot np^2\cdot e^{-2(n-1)p(1-p)},
\end{align*}
where we have used comparison with a geometric series (or the Mean Value Theorem) in the second line,
and the inequality $1-x\le e^{-x}$ in the third line. This last expression
tends to zero except in the case $p\le (\frac12+o(1))\frac1n\log n$, in which case $c_n\to-\infty$.

Hence it is enough to consider pairs of identically zero rows or columns.
As the number of zero rows and/or zero columns is stochastically decreasing as $p$ increases,
it is enough to prove the result just in the case when $c_n\to c$ as the other two
cases follow from stochastic domination and taking limits $c\to\pm\infty$.

Let $X$ be the number of zero rows and $Y$ the number of zero columns.
Then for $r,s\ge0$, $(X)_r(Y)_s$ counts the number of choices of $r$-tuples of rows and $s$-tuples of columns,
all filled with zero. Such a configuration consists of a union of $r$ rows and $s$ columns with
all $rn+sn-rs$ entries equal to zero. There are $(n)_r$ $r$-tuples of distinct rows
and $(n)_s$ $s$-tuples of distinct columns so, for fixed $r$ and~$s$,
\[
 \bE\big[(X)_r(Y)_s]=(n)_r(n)_s(1-p)^{rn+sn-rs}\to e^{-c(r+s)}\qquad\text{as }n\to\infty,
\]
as $(n)_r/n^r,(n)_s/n^s,(1-p)^{-rs}\to1$ and $n(1-p)^n=\exp(\log n-pn+O(p^2n))\to e^{-c}$ as $n\to\infty$.
Thus by \Cref{lem:poisson}, $(X,Y)$ converges in distribution to i.i.d.\ $\Po(e^{-c})$ random
variables. The probability that there are either two empty rows or two empty columns
is $\bP(X\ge2\text{ or }Y\ge2)$, which converges to the expression given in the statement of the lemma.
\end{proof}
  
Similarly, the following lemma gives the probability of the main obstruction to
weak reconstructibility. We say an entry in $M$ is an \defn{isolated $1$} if the entry
is a $1$ but all other entries in the same row or column are zero.
In the graph $G$ this corresponds to an isolated edge, i.e., a component consisting of a single edge. 
The main obstruction to weak reconstructibility turns out to be the appearance of two or more isolated $1$s
in the matrix. Indeed, in such a case the isolated $1$s must appear in distinct rows and columns,
and any permutation of their rows, say, results in a matrix distinct from~$M$, but
with identical multisets of rows and columns.

\begin{lemma}\label{lem:two1s}
 Suppose\/ $p=\frac{1}{2n}(\log n + \log\log n + c_n)\le\frac12$. Let\/ $X$ be the number of\/ $1$s
 in the matrix $M$ and let\/ $Y$ be the number of isolated\/ $1$s in $M$. Then,
 \[
  \bP(Y\ge2\text{ or }X<2)\to
  \begin{cases}1,&\text{if\/ }c_n\to-\infty,\\
   1-(1+e^{-c}/2)e^{-e^{-c}/2},&\text{if\/ }c_n\to c,\\
   0,&\text{if\/ }c_n\to\infty.
  \end{cases}
 \]
\end{lemma}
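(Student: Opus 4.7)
The plan follows the structure of the proof of \Cref{lem:equalrows}. Since $Y\le X$, the events $\{Y\ge2\}$ and $\{X<2\}$ are disjoint, so
\[
 \bP(Y\ge2\text{ or }X<2)=\bP(Y\ge2)+\bP(X<2).
\]
The main task is to compute the factorial moments of $Y$. An isolated~$1$ at $(i,j)$ requires $M_{i,j}=1$ and the other $2(n-1)$ cells in row~$i$ and column~$j$ to be~$0$. For $r$ distinct isolated 1s, the chosen positions must have pairwise distinct rows and pairwise distinct columns, since otherwise two chosen 1s would share a row or column, contradicting isolation. These $r$ rows and $r$ columns then span $r(2n-r)$ cells, exactly $r$ of which are 1s, so counting $(n)_r^2$ ordered tuples of valid positions,
\[
 \bE[(Y)_r]=(n)_r^2\,p^r\,(1-p)^{r(2n-r-1)}.
\]

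In every regime except $c_n\to\infty$ extremely fast, we have $p\to0$ and $np^2\to0$, so $(n)_r^2\sim n^{2r}$, $(1-p)^{-r(r+1)}\to1$, and $(1-p)^{2n}=e^{-2np}(1+o(1))$. Using $2np=\log n+\log\log n+c_n$,
\[
 \bE[(Y)_r]=\Bigl(\tfrac{e^{-c_n}}{2}\cdot\tfrac{\log n+\log\log n+c_n}{\log n}\Bigr)^r(1+o(1)).
\]
When $c_n\to c$ the middle factor tends to $1$, so $\bE[(Y)_r]\to(e^{-c}/2)^r$ and \Cref{lem:poisson} gives $Y\to\Po(e^{-c}/2)$, yielding $\bP(Y\ge2)\to1-(1+e^{-c}/2)e^{-e^{-c}/2}$. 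Chernoff (\Cref{lem:chernoff}) gives $\bP(X<2)\to0$ in this regime, since $\bE[X]=n^2p\to\infty$, proving the $c_n\to c$ case.

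For $c_n\to\infty$, a direct estimate (using $(1-p)^{2(n-1)}\le e^{-2(n-1)p}$) shows $\bE[Y]\to0$, so Markov gives $\bP(Y\ge2)\to0$, and Chernoff gives $\bP(X<2)\to0$. For $c_n\to-\infty$ a small case split is needed: if $\bE[Y]\to\infty$, then the factorial-moment computation $\bE[(Y)_2]=\bE[Y]^2(1+o(1))$ combined with Chebyshev yields $\bP(Y\ge2)\to1$; otherwise $2np=O(1/n)$, which forces $p=O(1/n^2)$, so the expected number of pairs of 1s sharing a row or column satisfies $\bE[P]=n^2(n-1)p^2\to0$, hence $X=Y$ with high probability and $\bP(Y\ge2)+\bP(X<2)=\bP(X\ge2)+\bP(X<2)+o(1)\to1$. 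The main subtlety is precisely the $c_n\to-\infty$ case, since monotonicity in $p$ is not available (unlike in \Cref{lem:equalrows}), and one must verify that $\{Y\ge2\}\cup\{X<2\}$ holds w.h.p.\ in the very sparse regime by combining contributions from both events.
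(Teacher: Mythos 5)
Your proposal is correct and follows essentially the same route as the paper: the factorial-moment computation $\bE[(Y)_r]=(n)_r^2p^r(1-p)^{2nr-r^2-r}$, Poisson convergence via \Cref{lem:poisson} in the critical regime, a first-moment bound for $c_n\to\infty$, and for $c_n\to-\infty$ a split into a second-moment (Chebyshev) argument versus the very sparse regime where $X=Y$ w.h.p. The only differences are cosmetic (the disjointness decomposition $\bP(Y\ge 2)+\bP(X<2)$, and splitting on $\bE[Y]\to\infty$ rather than on $n^2p\to\infty$), and your verification that boundedness of $\bE[Y]$ forces $p=O(1/n^2)$ is sound.
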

\begin{proof}
Note that $X\sim \Bin(n^2,p)$, so the $X<2$ condition is only significant
if $n^2p$ is bounded, i.e., only in the $c_n\to-\infty$ case.

Now $(Y)_r$ counts the number of $r$-tuples
of isolated~$1$s, all of which must lie in distinct rows and columns. Hence,
\[
 \bE\big[(Y)_r\big]=(n)_r (n)_r p^r(1-p)^{2nr-r^2-r}.
\]
If $c_n\to\infty$ then when $p\le n^{-1/2}$,
\[
 \bE[Y]=n^2p(1-p)^{2n-2}=n^2p e^{-2np-O(np^2+p)}=\frac{np}{\log n}e^{-c_n+O(1)}\to 0,
\]
and clearly $\bE[Y]=O(n^2e^{-2np})\to 0$ for larger~$p$.
Hence $\bP(Y\ge 2)\to0$ by Markov and, as noted above, $\bP(X<2)\to 0$ as well.

If $c_n\to c$ then,
\[
 n^2p(1-p)^{2n}=n^2p e^{-2np - O(np^2)}=\frac{np}{\log n}e^{-c_n+o(1)}\to e^{-c}/2.
\]
As $r$ is fixed and $p\to0$, we then have $\bE[(Y)_r]\to (e^{-c}/2)^r$.
Thus $Y$ tends in distribution to a $\Po(e^{-c}/2)$ random variable.
Also $\bP(X<2)\to0$, so $\bP(Y\ge2\text{ or }X<2)$ converges to the expression given.

If $c_n\to-\infty$ but $n^2p\to\infty$ then $\bE[Y]\to\infty$. However,
\[
 \frac{\bE[Y(Y-1)]}{\bE[Y]^2}=\frac{(n-1)^2}{n^2}(1-p)^{-2}\to 1.
\]
Thus $\Var[Y]=\bE[Y(Y-1)]+\bE[Y]-\bE[Y]^2=o(\bE[Y]^2)$.
Hence by Chebychev's inequality (i.e., the second moment method),
\[
 \bP(Y<2)\le \bP\big(|Y-\bE[Y]|>\bE[Y]-2\big)\le \frac{\Var[Y]}{(\bE[Y]-2)^2}\to 0,
\]
so $\bP(Y\ge2)\to 1$.

Finally we may assume $n^2p=O(1)$,
in which case $Y=X$ w.h.p.\ as the probability of any row or column containing at least
two 1s is $O(n^3p^2)=o(1)$. Hence in this case $\bP(Y\ge2\text{ or }X<2)\ge\bP(X=Y)\to 1$.
\end{proof}

The following lemma will allow us to bound the probability that two multisets of i.i.d.\ random variables
are the same. We shall use the notation $[x_1,\dotsc,x_d]$ to denote the multiset
consisting of the elements $x_1,\dotsc,x_d$.

\begin{lemma}\label{lem:maxprobmulti}
 Let $X_1,\dotsc,X_d$ be i.i.d.\ discrete random variables with $\bP(X_i=x)\le p_0$ for all\/~$x$.
 Then for any multiset\/ $\cM$ of\/ $d$ possible values of\/ $X_i$,
 \[
  \bP\big([X_1,\dotsc,X_d]=\cM\big)\le\frac{(2\pi d+2)^{1/2}\ }{(2\pi p_0d+1)^{1/(2p_0)}}
  =O\big(\sqrt{d}(2\pi p_0d)^{-1/(2p_0)}\big).
 \]
\end{lemma}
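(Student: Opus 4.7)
The plan is to write the probability explicitly, bound the two resulting factors separately, and then optimise over the multiplicities. First, let $z_1, \ldots, z_k$ be the distinct values appearing in $\cM$ with multiplicities $m_1, \ldots, m_k$ (so $\sum_j m_j = d$), and write $q_j = \bP(X_i = z_j)$, so that $q_j \le p_0$ and $\sum_j q_j \le 1$. Then
\[
 \bP\big([X_1, \ldots, X_d] = \cM\big) = \binom{d}{m_1, \ldots, m_k}\prod_{j=1}^{k} q_j^{m_j},
\]
and the task reduces to bounding the multinomial coefficient and the product separately.

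Next, I would upper-bound $\prod_j q_j^{m_j}$ by combining two elementary observations. The bound $q_j \le p_0$ directly gives $\prod_j q_j^{m_j} \le p_0^d$, while applying the weighted AM-GM inequality with weights $m_j/d$ to the variables $q_j d/m_j$ yields
\[
 \prod_j (q_j d/m_j)^{m_j/d} \le \sum_j (m_j/d)(q_j d/m_j) = \sum_j q_j \le 1,
\]
and hence $\prod_j q_j^{m_j} \le \prod_j (m_j/d)^{m_j}$. Combining the two,
\[
 \bP\big([X_1, \ldots, X_d] = \cM\big) \le \binom{d}{m_1, \ldots, m_k}\min\Big(p_0^d,\ \textstyle\prod_j (m_j/d)^{m_j}\Big).
\]
I would then apply calibrated Stirling bounds such as $n! \le \sqrt{2\pi n + \tfrac{1}{3}}(n/e)^n$ and $n! \ge \sqrt{2\pi n}(n/e)^n$ to the multinomial coefficient; these are precisely the kind of estimates that produce the $(2\pi d + 2)^{1/2}$ and $(2\pi p_0 d + 1)^{1/(2p_0)}$ constants appearing in the statement.

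The main obstacle is the remaining optimisation of this expression over integer partitions $m_1 + \cdots + m_k = d$ with each $m_j \ge 1$. The case split in the $\min$ corresponds to whether the empirical distribution $(m_j/d)$ has Shannon entropy above or below $\log(1/p_0)$. My plan is to show that the maximum lies at the interface between the two regions: in the high-entropy regime the bound $\binom{d}{m_1,\ldots,m_k}\prod(m_j/d)^{m_j}\approx\sqrt{2\pi d}/\prod\sqrt{2\pi m_j}$ is increasing in entropy towards the interface, while in the low-entropy regime the factor $p_0^d/\prod_j(m_j/d)^{m_j} < 1$ strictly suppresses the bound. A Lagrangian analysis on the interface then forces $m_j = p_0 d$ for all $j$ (so $k = 1/p_0$), and substitution into the Stirling expression yields exactly the claimed bound $\sqrt{2\pi d+2}/(2\pi p_0 d+1)^{1/(2p_0)}$.
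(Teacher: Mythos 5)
Your opening coincides with the paper's proof: the same explicit multinomial expression, essentially the same calibrated Stirling bounds (the paper uses $(d/e)^d\sqrt{2\pi d+1}\le d!\le (d/e)^d\sqrt{2\pi d+2}$), and your weighted AM--GM step $\prod_j q_j^{m_j}\le\prod_j(m_j/d)^{m_j}$ is correct. The gap is in the final optimisation, which is the heart of the lemma and which you only assert. Two concrete problems. First, in the low-entropy regime the observation that $p_0^d/\prod_j(m_j/d)^{m_j}\le 1$ only gives $\binom{d}{m_1,\dotsc,m_k}p_0^d\le\sqrt{2\pi d+2}/\prod_j\sqrt{2\pi m_j+1}$ \emph{at that same partition}; for $k=1$ this right-hand side is about $1$, far above the claimed bound, so the suppression factor must be tracked jointly with $\prod_j\sqrt{2\pi m_j+1}$ rather than discarded, and ``the maximum lies at the interface'' does not follow from what you wrote. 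Second, even on the interface you are minimising (roughly) $\sum_j\log(2\pi m_j+1)$ subject to \emph{two} constraints ($\sum_j m_j=d$ and fixed entropy); the stationarity condition $1/m_j=\lambda+\mu(1+\log m_j)$ can have two roots when $\mu<0$, so equal parts is not forced by a routine Lagrangian computation, the number of parts $k$ is itself a free variable, and $p_0d$, $1/p_0$ need not be integers, so the claimed optimum need not be an admissible partition.

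The paper sidesteps all of this with a reweighting trick worth comparing against. Writing $p_i$ for the probability of the $i$th value and $d_i$ for its multiplicity, it sets $\alpha_i=p_i/p_0\in(0,1]$ and $d_i'=d_i/\alpha_i$, so that $(dp_i/d_i)^{d_i}=(dp_0/d_i')^{\alpha_id_i'}$ and $2\pi d_i+1\ge(2\pi d_i'+1)^{\alpha_i}$. The logarithm of the Stirling bound then becomes $\sum_i\alpha_if(d_i')$ with $f(x)=x\log(dp_0/x)-\tfrac12\log(2\pi x+1)$ concave, under the \emph{single} linear constraint $\sum_i\alpha_id_i'=d$ with total weight $\sum_i\alpha_i=1/p_0$ (summing over the whole support). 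Concavity immediately forces all $d_i'$ equal to $p_0d$ as real numbers, with no case split, no integrality issue, and no optimisation over $k$. If you insist on your two-regime plan you would need to prove the inequality $\sum_j\log(2\pi m_j+1)\ge\frac1{p_0}\log(2\pi p_0d+1)$ under the entropy constraint and separately quantify the low-entropy suppression; adopting the reweighting is much cleaner.
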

We note that if $p_0d\le1$ then we have a better simple bound of $d!p_0^d$ obtained by summing over all
permutations $\sigma\in S_d$ the probability $\bP(X_1=x_{\sigma(1)},\dotsc,X_d=x_{\sigma(d)})$,
where $\cM=[x_1,\dotsc,x_d]$. 
\begin{proof}
Let the multiset $\cM$ with the highest probability have $d_i$ copies of an element $x_i$
where $x_i$ occurs with probability $p_i$, $i=1,2,\dotsc$.
We use the following version of Stirling's formula which holds for all $d\ge0$,
\[
 (d/e)^d\sqrt{2\pi d+1}\le d!\le (d/e)^d\sqrt{2\pi d+2}.
\]
We note that this clearly holds for $d=0$ (with the interpretation that $0^0=1$)
and follows easily for $d\ge1$ from the explicit bounds
\[
 (d/e)^d\sqrt{2\pi d}\,e^{1/(12d+1)}\le d!\le (d/e)^d\sqrt{2\pi d}\,e^{1/12d}
\]
proved by Robbins~\cite{robbins}.
Thus,
\[
 \bP\big([X_1,\dotsc,X_d]=\cM\big)=\frac{d!}{d_1!\dotsb d_n!}p_1^{d_1}\dotsb p_n^{d_n}
 \le \sqrt{2\pi d+2}\cdot\prod_{i=1}^n \Big(\frac{dp_i}{d_i}\Big)^{d_i}\frac{1}{\sqrt{2\pi d_i+1}}.
\]
where, without loss of generality, $d_1,\dotsc,d_n>0$ and $d_i=0$ for $i>n$.
Now set $\alpha_i=p_i/p_0$ and $d'_i=d_i/\alpha_i$. We note that
 $dp_i/d_i=dp_0/d'_i$ and $2\pi d_i+1\ge (2\pi d'_i+1)^{\alpha_i}$ as $0<\alpha_i\le 1$.
 Hence we can rewrite the bound as
 \[
   \log\bP\big([X_1,\dotsc,X_d]=\cM\big)\le\log \sqrt{2\pi d+2}
   +\sum_i \alpha_i\Big(d'_i\log \tfrac{dp_0}{d'_i}-\tfrac12\log\big(2\pi d'_i+1\big)\Big).
 \]
 Now maximise over the $d'_i$, assumed just to be non-negative
 reals with $\sum\alpha_id'_i=d$ (and include $d'_i$ with $i>n$ here as well). It is easy to check that
 $f(x)=x\log\frac{dp_0}{x}-\frac12\log(2\pi x+1)$ is concave, where we set $f(0)=0$.
 Indeed $f(x)\to0$ as $x\to0^+$ and 
 $f''(x)=-\frac{1}{x}+\frac{2\pi^2}{(2\pi x+1)^2}=-\frac{(2\pi x-1)^2 +(4-\pi)(2\pi x)}{x(2\pi x+1)^2}<0$
 for all $x>0$. Hence to maximise $\sum \alpha_i f(d'_i)$ subject to $\sum\alpha_i d'_i=d$
 requires taking all $d'_i$ to be equal, say $d'_i=d'$. We do this for all $i$, even $i>n$.
 But then $\frac{d}{d'}=\sum\alpha_i=\sum \frac{p_i}{p_0}=\frac{1}{p_0}$, so all $d'_i=d'=p_0d$. Thus,
 \[
   \log\bP\big([X_1,\dotsc,X_d]=\cM\big)\le\log \sqrt{2\pi d+2}-\tfrac{1}{2p_0}\log\big(2\pi p_0d+1\big),
 \]
 as required. 
\end{proof}

\section{Reconstructing almost all rows and columns}\label{sec:most}

For this section, let $M$ be a binary matrix whose entries are i.i.d.\ Bernoulli random variables
taking value $1$ with probability~$p$, where $\frac{\delta}{n}\log n\le p\le \frac12$
for some fixed small constant $\delta>0$.

Recall that the matrix $M$ corresponds to a bipartite graph $G$ with vertex classes $\cI$
and $\cJ$, both of size~$n$, corresponding to the indices of the rows and columns.
For $i\in\cI$, $j\in\cJ$, $ij$ is an edge of $G$ if and only if the matrix $M$ has 1 in the entry $(i,j)$.

Given the multisets $\cR$ and $\cC$, we can reconstruct two graphs, both isomorphic to~$G$.
The first is $G_R$, which is a bipartite graph with vertex classes $\cR$ and $\cJ$ where
the row value $r\in\cR$, which is a binary vector $r=(r_1,\dotsc,r_n)$, is joined to
all columns $j\in \cJ$ where $r_j=1$. The second is $G_C$, which is a bipartite graph
with vertex classes $\cI$ and $\cC$ where the column value $c=(c_1,\dotsc,c_n)^T\in \cC$
is joined to all rows $i\in \cI$ where $c_i=1$. Clearly $G_R$ and $G_C$ are both
isomorphic to $G$ by the correct identification of the row values in $\cR$ with
their indices in $\cI$, and the column values in $\cC$ with their indices in $\cJ$ respectively.

For a vertex $v$ of $G$ (or $G_R$ or $G_C$), define its \defn{$k$th degree statistics} inductively as follows:
\begin{align*}
 \cD_0(v) &= \deg(v),\\
 \cD_{k+1}(v) &= [\cD_k(u) \colon u\in N(v)], \text{ for } k>0.
\end{align*}
Note that as $G_R$ and $G_C$ are isomorphic to $G$, $\cD_k(v)$ can be reconstructed from
$\cR$, $\cC$, and \emph{either} the index \emph{or} the value of the row or column~$v$. In particular,
we observe that the index of a row value $r$, say, can be correctly identified
if for all rows $r'\ne r$ there is some $k$ such that $\cD_k(r)\ne \cD_k(r')$.

We first show that for large $p$, w.h.p.\ even $\cD_1(v)$ is enough to uniquely identify all rows and columns.

\begin{lemma}\label{lem:largep}
 There exists a $C>0$ such that for $\frac{C}{n}\log^2 n\le p\le \frac{1}{2}$ and any two distinct
 rows $r$ and\/ $r'$ of\/ $M$,
 \[
  \bP\big(\cD_1(r)=\cD_1(r')\big)=o(n^{-2}).
 \]
 In particular, w.h.p.\ $M$ is strongly reconstructible.
\end{lemma}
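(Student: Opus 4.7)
The plan is to identify $\cD_1(r)$ with the multiset of column sums of the columns where $r$ has a~$1$, and then compare two such multisets using \Cref{lem:maxprobmulti}. Let $r=M_{i,\cdot}$ and $r'=M_{i',\cdot}$ for fixed distinct positions $i,i'\in\cI$, and put $S=\{j:r_j=1\}$, $S'=\{j:r'_j=1\}$, $A=S\setminus S'$ and $B=S'\setminus S$. The entries of $\cD_1(r)$ are exactly the column degrees $\deg_G(j)$ for $j\in S$, and those of $\cD_1(r')$ are the column degrees for $j\in S'$; since the columns in $S\cap S'$ contribute the same value to both multisets, $\cD_1(r)=\cD_1(r')$ forces $|A|=|B|$ and $[\deg_G(j):j\in A]=[\deg_G(j):j\in B]$. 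For $j\in A\cup B$ we have $\deg_G(j)=1+\sum_{k\notin\{i,i'\}}M_{k,j}$, so conditional on $A$ and $B$, these $|A|+|B|$ column degrees are i.i.d.\ copies of $Z:=1+\Bin(n-2,p)$, and the $A$-sample is independent of the $B$-sample.

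I would first dispose of the atypical cases. Since $|A|,|B|\sim\Bin(n,p(1-p))$ with mean at least $np/2$ (as $p\le\tfrac12$), \Cref{lem:chernoff} gives
\[
 \bP\bigl(\min(|A|,|B|)<np/4\bigr)\le 2\exp(-np/16)=n^{-\Omega(C\log n)},
\]
which is $o(n^{-2})$ once $C$ is chosen sufficiently large. (This also absorbs the event $r=r'$.) It therefore suffices to bound $\bP(\cD_1(r)=\cD_1(r')\mid |A|=|B|=d)$ for each $d\ge np/4$.

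Next, conditional on $|A|=|B|=d$ and on the $B$-sample viewed as a fixed multiset $\cM$, the $A$-sample consists of $d$ i.i.d.\ copies of $Z$ independent of $\cM$, so \Cref{lem:maxprobmulti} applies. A standard bound for the maximum point mass of a binomial (via Stirling or the local central limit theorem) gives
\[
 p_0:=\max_x\bP(Z=x)=O\bigl(1/\sqrt{np(1-p)}\bigr)=O(1/\sqrt{np}),
\]
so
\[
 \bP\bigl(\cD_1(r)=\cD_1(r')\bigm|\,|A|=|B|=d\bigr)=O\bigl(\sqrt d\,(2\pi p_0d+1)^{-1/(2p_0)}\bigr).
\]
Taking $p_0=c/\sqrt{np}$, for $d\ge np/4$ and $p\ge\frac{C}{n}\log^2n$ one has $p_0d=\Omega(\sqrt{np})=\Omega(\log n)$ and $1/(2p_0)=\Omega(\sqrt{np})=\Omega(\log n)$. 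Hence $(2\pi p_0d+1)^{1/(2p_0)}\ge\exp\bigl(\Omega(\sqrt{np}\,\log\sqrt{np})\bigr)=n^{\Omega(\sqrt C\,\log\log n)}$, which comfortably dominates $n^{5/2}$ for $n$ large. Since $\sqrt d\le\sqrt n$, the right-hand side is $o(n^{-2})$, as required.

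Finally, strong reconstructibility follows by a union bound over the $\binom{n}{2}$ ordered pairs of rows and, symmetrically, over the pairs of columns (the latter via the identical distribution of $M$ and $M^T$). W.h.p.\ then $\cD_1$ is injective on row values and on column values; since $\cD_1$ is a graph invariant and $G_R\cong G\cong G_C$, we can match every row value $r\in\cR$ to the unique row index $i\in\cI$ with $\cD_1^{G_C}(i)=\cD_1^{G_R}(r)$, and similarly for columns. The main obstacle will be locking down the tight point-mass bound $p_0=O(1/\sqrt{np})$ and verifying that the $\log\log n$ factor arising from $\log(p_0d)$ is precisely what drags the denominator past $n^{5/2}$ at the threshold $p=\Theta(\log^2n/n)$; for larger $p$ the estimates are much more generous.
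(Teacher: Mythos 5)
Your argument is correct and is essentially identical to the paper's proof: both reduce $\cD_1(r)=\cD_1(r')$ to equality of the degree multisets over $N(r)\setminus N(r')$ and $N(r')\setminus N(r)$, observe that conditionally these are independent i.i.d.\ samples of $1+\Bin(n-2,p)$, apply \Cref{lem:maxprobmulti} with $p_0=\Theta(1/\sqrt{np})$, dispose of the small-$d$ case by Chernoff, and finish with a union bound. No gaps worth noting.
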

\begin{proof}
Let $r,r'\in\cR$. If $\deg(r)\ne \deg(r')$ then clearly $\cD_1(r)\ne \cD_1(r')$, so we may assume $\deg(r)=\deg(r')$.
If $\cD_1(r)=\cD_1(r')$ then the multiset $\cM_1$ of degrees of vertices (columns) in $N(r)\setminus N(r')$
is the same as the multiset $\cM_2$ of degrees of vertices in $N(r')\setminus N(r)$.
The multiset of degrees of vertices in $N(r)\cap N(r')$ contributes equally to $\cD_1(r)$ and $\cD_1(r')$,
so we can ignore these.

If we condition on $N(r)$ and $N(r')$ (i.e., the entries in rows $r$ and~$r'$)
and write $d=|N(r)\setminus N(r')|=|N(r')\setminus N(r)|$ then $\cM_1$ and $\cM_2$ are
i.i.d.\ random multisets of size~$d$, each of which consists of
i.i.d.\ $\Bin(n-2,p)+1$ random variables. The $+1$ is because we have exactly
one of $r$ or $r'$ counted in the degrees. The elements of the multisets and the multisets
themselves are independent as they depend only on distinct columns, respectively
disjoint rectangles $(\cI\setminus\{r,r'\})\times(N(r)\setminus N(r'))$
and $(\cI\setminus\{r,r'\})\times(N(r')\setminus N(r))$, of unconditioned entries in~$M$.

Now if $X\sim\Bin(n-2,p)$ then
$p_0\coloneqq\max_x\bP(X=x)=\Theta(1/\sqrt{np})$, which we may assume is at most $1/(4\log n)$ if $C$
is large enough. Hence, by \Cref{lem:maxprobmulti},
\[
 \bP(\cM_1=\cM_2)=O(\sqrt{d})\exp\big(-\tfrac{1}{2p_0}\log(2\pi p_0d)\big)
 =O\big(n^{1-2\log(2\pi p_0d)}\big)=o(n^{-2}),
\]
provided $p_0d > 2$, say. But $d\sim \Bin(n,p(1-p))$ stochastically dominates $\Bin(n,p/2)$
and thus by \Cref{lem:chernoff},
\[
 \bP(d<np/4)\le \exp(-np/16)=o(n^{-2}).
\]
But $d\ge np/4$ implies $p_0d=\Theta(\sqrt{np})>2$ for large~$n$. Hence unconditionally,
\[
 \bP\big(\cD_1(r)=\cD_1(r')\big)=\bP\big(\cM_1=\cM_2\big)=o(n^{-2}).
\]
The union bound now shows that the probability that there are two rows with the same value of
$\cD_1(r)$ is $o(1)$, and the same also holds for columns. Since the $\cD_1(v)$
can be determined either from the values or the indices of the rows or columns~$v$,
w.h.p.\ each row or column value can be associated with a unique index, and so
$M$ is strongly reconstructible.
\end{proof}

For smaller values of $p$ we will need to consider $\cD_2(v)$. Before we do so, it
will be convenient to prove some results about the typical structure of $M$ when
$p=O(\frac1n(\log n)^2)$.

\begin{lemma}\label{obs:atmost2common1s}
 Let $p=O(\frac1n\log^2 n)$.  Then with high probability, every pair of distinct
 rows $r$ and\/ $r'$ satisfy $|N(r)\cap N(r')|\le 2$.
\end{lemma}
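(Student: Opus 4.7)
The plan is a straightforward first moment computation. For any pair of distinct rows $r, r'$ and any triple of distinct columns $\{j_1, j_2, j_3\}$, the event that all three of these columns lie in $N(r) \cap N(r')$ is determined by six independent entries of $M$, each equal to $1$, so it has probability $p^6$. If some pair $r,r'$ violates the conclusion, then there exists such a triple of common neighbours.

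I would therefore define $Z$ to be the number of triples $(\{r,r'\}, \{j_1,j_2,j_3\})$ with $\{j_1,j_2,j_3\} \subseteq N(r) \cap N(r')$ and bound
\[
 \bE[Z] \le \binom{n}{2}\binom{n}{3} p^6 = O(n^5 p^6).
\]
Substituting $p = O(\tfrac{1}{n}\log^2 n)$ gives $\bE[Z] = O(n^{-1} \log^{12} n) = o(1)$, so by Markov's inequality $\bP(Z \ge 1) = o(1)$. On the complementary event every pair of rows satisfies $|N(r) \cap N(r')| \le 2$, which is what we want.

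There is essentially no obstacle here; the argument is a clean union bound and the only thing worth double-checking is the exponent, namely that demanding three common columns costs $p^6$ (six specified entries) and the number of choices is at most $n^5$, which comfortably beats the $p^6$ factor under the assumed upper bound on $p$. The lemma as stated only mentions rows, so no separate column calculation is needed (though by symmetry the same bound would hold for columns).
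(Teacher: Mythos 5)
Your proof is correct and is essentially identical to the paper's argument: the paper also notes that three common neighbours force six specified entries to equal $1$, bounds $\bP(|N(r)\cap N(r')|\ge 3)\le\binom{n}{3}p^6=o(n^{-2})$, and finishes with a union bound over the $\binom{n}{2}$ pairs of rows. Folding the union bound into a single first-moment count of $(\text{pair},\text{triple})$ configurations, as you do, is the same calculation.
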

\begin{proof}
We have $|N(r)\cap N(r')|\sim\Bin(n,p^2)$, so
\[
 \bP\big(|N(r)\cap N(r')|\ge 3\big) \le \tbinom{n}{3} p^6 = o(n^{-2}).
\]
A union bound now shows that w.h.p.\ there is no pair of rows $r$, $r'$
for which $|N(r)\cap N(r')|\ge 3$.
\end{proof}

The next fact roughly states that the $2$-balls in our bipartite graph are essentially cycle-free.
We will use this to argue that the degree distributions of leaves in these balls are essentially independent.

\begin{lemma}\label{obs:secondneighbourhood}
 For $p=O(\frac1n\log^2n)$ w.h.p.\ there does not exist a pair $r$, $r'$ of rows for which there
 are more than two rows $s\ne r,r'$ with at least two neighbours in $N(r)\cup N(r')$.
\end{lemma}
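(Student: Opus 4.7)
The plan is to carry out a first moment calculation. I would let $Y$ count the number of ordered 5-tuples $(r, r', s_1, s_2, s_3)$ of pairwise distinct rows such that each $s_i$ has at least two neighbours in $S_{r,r'} \coloneqq N(r) \cup N(r')$, and show $\bE[Y] = o(1)$; Markov's inequality then completes the proof.

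The first step is to fix $r, r'$ and condition on the set $S \coloneqq S_{r,r'}$, which depends only on the entries of rows $r$ and~$r'$. Since $s_1, s_2, s_3$ are distinct from each other and from $r, r'$, the events ``$|N(s_i) \cap S| \ge 2$'' are conditionally independent given~$S$, because each depends only on entries in row $s_i$ together with the fixed set~$S$. For each such $s$, $|N(s) \cap S| \sim \Bin(|S|, p)$, so \Cref{lem:small-mean} gives $\bP(|N(s) \cap S| \ge 2 \mid S) \le \binom{|S|}{2} p^2 \le |S|^2 p^2 / 2$, and hence the conditional probability that all three $s_i$ are bad is at most $|S|^6 p^6 / 8$.

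The second step is to take expectation over~$S$. Using $|S| \le |N(r)| + |N(r')|$ with each summand $\Bin(n,p)$ of mean $np = O(\log^2 n) \ge 1$ (for large $n$), standard moment estimates for the binomial distribution give $\bE[|S|^6] = O((np)^6) = O(\log^{12} n)$. Summing over at most $n^5$ ordered 5-tuples then yields
\[
 \bE[Y] \le n^5 \cdot \tfrac{1}{8} p^6 \cdot \bE[|S|^6] = O\bigl(n^5 \cdot (\log^2 n / n)^6 \cdot \log^{12} n\bigr) = O(\log^{24} n / n) = o(1),
\]
as required.

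There is no serious obstacle here; the only delicate point to verify is the independence structure, namely that once $S$ is fixed the three events depend on disjoint sets of matrix entries (the three distinct rows $s_i$), while $S$ itself is determined by the two further disjoint rows $r, r'$. Everything else is a routine binomial moment computation, and the bound is comfortably $o(1)$ thanks to the polylogarithmic assumption on~$p$.
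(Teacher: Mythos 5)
Your proof is correct and is essentially the same first-moment argument as the paper's: both bound the expected number of bad $5$-tuples $(r,r',s_1,s_2,s_3)$ and apply Markov. The only real difference is bookkeeping. The paper enumerates explicit configurations: it picks two witnesses in $N(r)\cup N(r')$ for each $s_i$, obtaining a set of $5$ rows and $k\le 6$ columns spanning $k+6$ ones, and bounds the expected count by $O(n^{k+5}p^{k+6})=o(1)$. You instead condition on $S=N(r)\cup N(r')$, use conditional independence of the three rows $s_i$ together with \Cref{lem:small-mean}, and then control $\bE[|S|^6]$ by binomial moment estimates; this is a clean equivalent (expanding $\bE\big[\binom{|S|}{2}^3\big]$ recovers the paper's sum over column choices). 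One small imprecision: the hypothesis $p=O(\frac1n\log^2 n)$ gives only an upper bound on $np$, so your parenthetical claim that $np\ge 1$ for large $n$ does not follow. This is harmless: if $np\le 1$ then $\bE[|S|^6]=O(np)$, giving $\bE[Y]=O(n^6p^7)=o(1)$ as well (and in the paper's applications one anyway has $np\ge\delta\log n$), but the case split should be acknowledged rather than asserted away.
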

\begin{proof}
Consider a pair of rows $r$ and $r'$. If rows $s_1,s_2,s_3\ne r,r'$ all have at least 
two neighbours in $N(r)\cup N(r')$, then choose two such neighbours for each of these rows to form
a subset $C\subseteq N(r)\cup N(r')$ of columns of size $k:=|C|\le 6$ with at least 6 edges from $\{s_1,s_2,s_3\}$ to~$C$.
Adding $r$ and~$r'$, it follows that there is a set of $5$ rows and $k$ columns with $k+6$ entries equal to~$1$.
The expected number of these configurations is $O(n^{k+5}p^{k+6})=o(1)$.
Summing over $k\le6$, we see that with high probability there is no configuration of this type,
and so no choice of $r$, $r'$ and the~$s_i$.
\end{proof}

A standard Chernoff bound argument shows the following bound on the maximum number of 1s in any row or column.

\begin{lemma}\label{obs:maxdeg}
 Fix $\delta>0$. Then there exists a constant $K=K(\delta)$ such that
 for $p\ge \frac{\delta}{n}\log n$, w.h.p., no row or column has degree more than $Knp$ in $G$.
\end{lemma}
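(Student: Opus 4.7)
The plan is to apply the upper-tail Chernoff bound from \Cref{lem:chernoff} to a single row (or column) degree and then union bound over all $2n$ rows and columns.

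Fix a row $r\in\cI$. Its degree in $G$ equals the number of $1$s in that row of $M$, which is distributed as $\Bin(n,p)$, with mean $np$. For any constant $K>1$, writing $K=1+\eps$ with $\eps=K-1$, \Cref{lem:chernoff} gives
\[
 \bP\big(\deg(r)\ge Knp\big)\le \exp\Big(-\tfrac{(K-1)^2}{K+1}np\Big).
\]
Since $p\ge \frac{\delta}{n}\log n$, we have $np\ge\delta\log n$, so this probability is at most $n^{-(K-1)^2\delta/(K+1)}$. The same bound holds for any column degree. Now I would choose $K=K(\delta)$ large enough that $(K-1)^2\delta/(K+1)\ge 2$; this is possible because the left-hand side grows linearly in $K$ for large $K$, while we need it to clear the fixed threshold $2$. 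With this choice, the probability that a given row or column has degree exceeding $Knp$ is at most $n^{-2}$.

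A union bound over the $n$ rows and $n$ columns then gives total failure probability at most $2n\cdot n^{-2}=2/n=o(1)$, so w.h.p.\ no row or column has degree more than $Knp$, as claimed. There is no significant obstacle here: the argument is the textbook Chernoff-plus-union-bound template, and the role of the hypothesis $p\ge\frac{\delta}{n}\log n$ is simply to make the exponent in the tail bound beat the $\log n$ coming from the union bound.
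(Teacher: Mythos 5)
Your proof is correct and is essentially identical to the paper's: both set $\eps=K-1$ in the Chernoff bound, use $np\ge\delta\log n$ to make the exponent at least $2\log n$ for $K=K(\delta)$ large, and finish with a union bound over the $2n$ rows and columns.
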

\begin{proof}
Set $\eps=K-1$ in \cref{lem:chernoff} and note that 
$\frac{\eps^2 np}{2+\eps}\ge \frac{(K-1)^2 \delta}{K+1}\log n\ge 2\log n$
for sufficiently large~$K$. Thus the probability that a fixed row or column has degree
more than $Knp$ is at most $e^{-2\log n}=1/n^2$.
The result follows from the union bound over the $2n$ rows and columns.
\end{proof}

Call a row or column \defn{heavy} if it has at least $\frac{1}{2}np$ ones, and \defn{light} otherwise.

\begin{lemma}\label{obs:heavyrows}\label{lem:few1sinlight}
 Fix $\delta>0$ and assume $p\ge \frac{\delta}{n}\log n$.
 Then w.h.p.\ there are at most $n^{1-\delta/9}=o(n)$ light rows.
 Moreover, w.h.p.\ each row $r$ is adjacent to at most $K'=K'(\delta)$ light columns.
\end{lemma}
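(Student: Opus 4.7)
The proof splits into the two assertions, both built on a Chernoff estimate on a single row or column being light. Since a row has $\Bin(n,p)$ ones, \Cref{lem:chernoff} with $\eps=\tfrac12$ gives $\bP(\text{a given row is light})\le e^{-np/8}\le n^{-\delta/8}$, using $np\ge \delta\log n$; the same holds for columns. Hence the expected number of light rows is at most $n\cdot n^{-\delta/8}=n^{1-\delta/8}$, and Markov's inequality gives that more than $n^{1-\delta/9}$ light rows appear with probability at most $n^{-\delta/72}\to 0$, which proves the first assertion.

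For the second assertion, fix a row $r$ and let $L_r$ denote the number of light columns in $N(r)$. Write $L_r=\sum_{j=1}^n X_j$, where $X_j=1$ if column $j$ is both adjacent to $r$ and light, and $X_j=0$ otherwise. The crucial observation is that each $X_j$ depends only on the entries of column $j$ of $M$, so the $X_j$ are mutually independent. Conditioning on $M_{r,j}=1$ and applying \Cref{lem:chernoff} to the remaining $\Bin(n-1,p)$ entries of column $j$ yields $\bP(X_j=1)\le 2p\,e^{-np/8}$ for $n$ large (the factor of $2$ absorbs the difference between the thresholds $np/2$ and $(n-1)p/2$, using $p\le\tfrac12$). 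By \Cref{lem:small-mean},
\[
 \bP(L_r\ge K')\le \binom{n}{K'}\bigl(2p\,e^{-np/8}\bigr)^{K'}\le \left(\frac{2enp\,e^{-np/8}}{K'}\right)^{K'}.
\]

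The main obstacle is to control $np\cdot e^{-np/8}$ uniformly across the whole allowed range $\delta\log n\le np\le n/2$: here the direct bound $e^{-np/8}\le n^{-\delta/8}$ is wasteful when $np$ is much larger than $\log n$. The resolution is that the function $x\mapsto xe^{-x/8}$ is decreasing for $x\ge 8$, and $\delta\log n>8$ for $n$ large, so $np\cdot e^{-np/8}\le \delta\log n\cdot n^{-\delta/8}$ throughout the range. Substituting this and taking a union bound over the $n$ choices of $r$ gives
\[
 \bP(\exists r:\, L_r\ge K')\le n\cdot (\polylog n)\cdot n^{-K'\delta/8},
\]
which tends to zero as soon as $K'\delta/8>1$. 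Taking $K'=K'(\delta)\coloneqq\lceil 9/\delta\rceil$ suffices, and the analogous argument with rows and columns swapped finishes the proof.
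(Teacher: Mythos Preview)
Your proof is correct and follows essentially the same strategy as the paper: Chernoff to bound the probability of being light, independence across columns, a binomial tail bound for the count, and a union bound over rows with the same choice $K'=\lceil 9/\delta\rceil$.

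There is one small but genuine difference worth noting in the second assertion. The paper first reveals $N(r)$, then bounds the probability that at least $K'$ of those $|N(r)|$ columns are light by $\binom{|N(r)|}{K'}e^{-K'(n-1)p/8}$, and finally invokes \Cref{obs:maxdeg} to replace $|N(r)|$ by $Knp$. You instead write $L_r=\sum_j X_j$ over \emph{all} columns, observe that each $X_j$ depends only on column~$j$ so the $X_j$ are independent, and absorb the adjacency event into the success probability $\bP(X_j=1)\le 2p\,e^{-np/8}$. This sidesteps \Cref{obs:maxdeg} entirely and is slightly more self-contained; the price is that you must control $np\cdot e^{-np/8}$ uniformly in $np$, which you do correctly via the monotonicity of $x\mapsto xe^{-x/8}$ on $[8,\infty)$. (Minor remark: the factor~$2$ actually arises from $e^{-(n-1)p/8}=e^{p/8}e^{-np/8}\le 2e^{-np/8}$ rather than from the threshold shift you mention, since $np/2-1<(n-1)p/2$ already goes the right way; the bound is correct regardless.)
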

\begin{proof}
By \cref{lem:chernoff}, each row is light with probability at most
$e^{-np/8}\le n^{-\delta/8}$, so by Markov there are w.h.p.\ at most $n^{1-\delta/9}$
light rows.

Now given a column $c$ and a row index $i_0\in\cI$, \cref{lem:chernoff} gives
\[
\bP\Big( \sum_{i\neq i_0}c_i < \tfrac{1}{2}np-1\Big)
\le \bP\Big( \sum_{i\neq i_0}c_i < \tfrac{1}{2}(n-1)p\Big) \le e^{-(n-1)p/8},
\]
as $\sum_{i\ne i_0}c_i\sim\Bin(n-1,p)$.
Therefore, given a row $r$, we have that, conditioned on $N(r)$,
the probability that a fixed column $c\in N(r)$ is light is at most $e^{-(n-1)p/8}$.
Hence, as distinct columns are independent,
\[
 \bP\big(\text{There are at least $K'$ light columns in }N(r)\big) \le
 \binom{|N(r)|}{K'}\big(e^{-(n-1)p/8}\big)^{K'}.
\]
Now w.h.p.\ every row $r$ satisfies $|N(r)|\le Knp$ by \cref{obs:maxdeg},
so for $K'\coloneqq\lceil 9/\delta\rceil$, say, we have
\[
 \binom{|N(r)|}{K'}\big(e^{-(n-1)p/8}\big)^{K'}\le e^{K'(\log(Knp)-(n-1)p/8)}=o(n^{-1}).
\]
Taking a union bound over the rows of $M$ finishes the argument.
\end{proof}

Thus, it is enough to reconstruct only heavy rows and columns in order to reconstruct $(1-o(1))$ of the matrix.

\begin{lemma}\label{lem:2nddegreestatistics}
 Fix $C,\delta>0$ and suppose $\frac{\delta}{n}\log n\le p=p(n)\le \frac{C}{n}\log^2 n$.
 Then w.h.p.\ for any pair of distinct rows $r$ and\/~$r'$ with $r$ heavy, $\cD_2(r)\ne\cD_2(r')$.
 In particular, w.h.p.\ all heavy row and column values can be matched with their indices.
\end{lemma}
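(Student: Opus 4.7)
The plan is to show that $\bP\bigl(\cD_2(r) = \cD_2(r')\bigr) = o(n^{-2})$ for each fixed pair of distinct rows $r \ne r'$ with $r$ heavy, then conclude by a union bound over the at most $n^2$ such pairs. As a preliminary, note that $\cD_1(v)$ can be recovered from $\cD_2(v)$ by taking cardinalities of its elements, so $\cD_2(r) = \cD_2(r')$ forces $\deg(r) = \deg(r')$, whence $r'$ is also heavy, and it further forces the two multisets
\[
  \cM_1 := [\cD_1(c) : c \in N(r) \setminus N(r')] \qquad\text{and}\qquad
  \cM_2 := [\cD_1(c) : c \in N(r') \setminus N(r)]
\]
to be equal. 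By \Cref{obs:atmost2common1s} and \Cref{obs:maxdeg}, we may assume $\tfrac12 np - 2 \le d := |N(r) \setminus N(r')| \le Knp$ w.h.p.

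The crux is a careful conditioning that renders the $\cD_1(c)$ for $c \in N(r) \triangle N(r')$ essentially independent. Let $S$ be the set of \emph{rogue} rows $s \notin \{r, r'\}$ with at least two neighbours in $N(r) \cup N(r')$, which by \Cref{obs:secondneighbourhood} has size $|S| \le 2$ w.h.p. I would reveal all entries of $M$ in rows $\{r, r'\} \cup S$, together with every entry $M_{s, j}$ for $s \notin \{r, r'\} \cup S$ and $j \in N(r) \cup N(r')$. Under this conditioning, for each $c \in N(r) \triangle N(r')$ the multiset $\cD_1(c)$ splits as a deterministic part of size at most $1 + |S| \le 3$ (from $r$ or $r'$ and any $s \in S \cap N(c)$) together with a random part $D^*(c) := [\deg(s) : s \in N^*(c)]$, where $N^*(c)$ is the set of non-rogue rows whose unique neighbour in $N(r) \cup N(r')$ is $c$. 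The still-unrevealed entries $M_{s,j}$ are i.i.d.\ Bernoulli$(p)$, each non-rogue $s$ lies in at most one $N^*(c)$, and for $s \in N^*(c)$, $\deg(s) = 1 + Y_s$ with $Y_s \sim \Bin(n - |N(r) \cup N(r')|, p)$ depending only on unrevealed entries of row~$s$. The $D^*(c)$ are therefore independent multisets of i.i.d.\ random variables.

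The event $\cM_1 = \cM_2$ requires a degree-preserving bijection $\phi : N(r) \setminus N(r') \to N(r') \setminus N(r)$ with $\cD_1(c) = \cD_1(\phi(c))$ for every~$c$; for each fixed $\phi$ and each $c$, this pins down $D^*(\phi(c))$ as a specific multiset function of $D^*(c)$ and the conditioned data. For a heavy column $c$, $|N^*(c)| \ge \tfrac12 np - O(1)$ and the maximum point probability of $Y_s$ is $p_0 = O(1/\sqrt{np})$, so \Cref{lem:maxprobmulti} yields
\[
  q := \max_{\mathcal{N}} \bP\bigl(D^*(c) = \mathcal{N}\bigr) \le \exp\bigl(-\Omega(\sqrt{np}\,\log(np))\bigr).
\]
Conditioning on the $D^*(c)$ for $c \in N(r) \setminus N(r')$ and using independence of the $D^*(\phi(c))$ on the disjoint right-hand side, each fixed $\phi$ contributes at most $q^{d - O(1)}$ to $\bP(\cM_1 = \cM_2)$, where the $O(1)$ exponent correction absorbs the at most $O(1)$ light columns permitted by \Cref{lem:few1sinlight}. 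Summing over the at most $d!$ bijections,
\[
  \bP(\cM_1 = \cM_2) \le d!\, q^{d - O(1)}
  \le \exp\bigl(O(np\log(np)) - \Omega(np\,\sqrt{np}\,\log(np))\bigr)
  = n^{-\omega(1)},
\]
which is certainly $o(n^{-2})$ since $\sqrt{np} \to \infty$ when $np \ge \delta\log n$. A union bound over pairs $(r, r')$ establishes the row statement; transposing $M$ handles columns. Finally, $\cD_2(v)$ for a heavy row or column $v$ is computable from its value (via $G_R$ or $G_C$) as well as from its index (via $G$), so distinctness of $\cD_2$-values across all heavy rows and columns yields the required matching.

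The main technical obstacle is arranging genuine independence of the $D^*(c)$'s: two columns $c, c' \in N(r) \triangle N(r')$ could a priori share many row-neighbours and thereby couple their $\cD_1$'s through the random degrees. The rogue-set construction from \Cref{obs:secondneighbourhood} is precisely what breaks this coupling, reducing it to a bounded $O(1)$ correction absorbed into the deterministic parts; once that is in place, a two-level use of \Cref{lem:maxprobmulti}—once to bound the single-value probability $q$ of $D^*(c)$, once (morally) in the collision bound $d!\,q^{d-O(1)}$ for two independent random multisets of size~$d$—delivers the required tail bound.
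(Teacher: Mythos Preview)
Your proof is correct and follows essentially the same strategy as the paper: reduce $\cD_2(r)=\cD_2(r')$ to the existence of a matching of columns with equal $\cD_1$-values, arrange independence of the relevant $\cD_1(c)$'s by controlling the $O(1)$ rows with multiple neighbours in $N(r)\cup N(r')$ (via \Cref{obs:secondneighbourhood}), apply \Cref{lem:maxprobmulti} to each column, and beat the $d!$ from the union over matchings. The only cosmetic differences are that the paper conditions asymmetrically (revealing all of $N(N(r'))$ so that $\cD_2(r')$ is deterministic, then working with ``admissible'' columns in $N(r)$), whereas you condition symmetrically and split each $\cD_1(c)$ into a small deterministic part plus an independent $D^*(c)$; and the paper matches $N(r)\to N(r')$ while you match across the symmetric difference---but these lead to the same computation up to $O(1)$ corrections.
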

\begin{proof}
We may assume $\deg(r)=\deg(r')$ as otherwise $\cD_2(r) \ne \cD_2(r')$.
We may also assume the conclusions of Lemmas~\ref{obs:atmost2common1s}--\ref{lem:few1sinlight}
all hold. Reveal all entries in columns of $N(r)\cup N(r')$ and in rows of $N(N(r'))$,
so that the full information determining $\cD_2(r')$ is revealed.
For the second degree statistics to coincide, there needs to be a matching
$\sigma \colon N(r)\to N(r')$ such that $\cD_1(c) = \cD_1(\sigma(c))$ for all columns $c\in N(r)$.
We take a union bound over all $d!$ such matchings where $d=|N(r)|=|N(r')|$.
\begin{align*}
 \bP\big(\cD_2(r) = \cD_2(r')\big)
 &\le \sum_\sigma \bP\big(\forall c\in N(r),\ \cD_1(c) = \cD_1(\sigma(c))\big)\\
 &\le d!\cdot \max_\sigma \bP\big(\forall c\in N(r),\ \cD_1(c) = \cD_1(\sigma(c))\big).
\end{align*}
It now suffices to bound the last probability for any matching~$\sigma$.

We first show that essentially all degree distributions $\cD_1(c)$ are independent
from each other and crucially from the information revealed so far. 
The probability of each $\cD_1(c)$ hitting its prescribed degree distribution is
bounded by \cref{lem:maxprobmulti}. Taking a product over the valid $c$ yields the desired bound.

Say that a column $c\in N(r)$ is \defn{admissible} if none of the following occurs:
\begin{itemize}
\item[(i)] $c\in N(r')$,
\item[(ii)] there is a $c'\in N(r)\cup N(r')$ with $c'\ne c$
 such that $(N(c')\cap N(c))\setminus\{r,r'\}\ne\emptyset$,
\item[(iii)] $\deg(c) < \frac12np$.
\end{itemize}
By \cref{obs:atmost2common1s}, $N(r)$ and $N(r')$ share at most two elements,
so at most two columns $c$ fail because of condition~(i).
By \cref{obs:secondneighbourhood} there are at most two rows $s\ne r,r'$ whose neighbourhood
intersects $N(r)\cup N(r')$ in at least two columns. Since these intersections have size
at most $4$ each, there are at most $8$ columns $c\in N(r)\cup N(r')$ 
for which there is a $c'\in N(r)\cup N(r')$, $c'\ne c$, with $N(c)\cap N(c')$ containing
such a row~$s$. In other words, there are at most 8 columns which fail the condition~(ii).
Lastly, by \cref{lem:few1sinlight}, only constantly many $c\in N(r)$ fail condition~(iii).

As $r$ is assumed heavy, for large $n$ there are at least $\frac{1}{3}np$ admissible vertices in $N(r)$.

Notice that if a column $c$ is admissible, then, after omitting $r$, all rows in its
neighbourhood $N(c)\setminus \{r\}$ have had exactly one non-zero entry revealed so far.
In particular, the degree of each such row only depends on its $N\coloneqq n-|N(r)\cup N(r')|$ entries
yet to be revealed, so there are i.i.d.\ random variables $X_1,\dotsc,X_{\deg(c)}\sim \Bin(N, p)$
such that $\cD_1(c) =[\deg(r), X_1+1,\dotsc, X_{\deg(c)}+1]$.
Additionally, the second admissibility condition ensures that neighbourhoods of distinct
admissible columns are disjoint. Namely, the degrees of the neighbours of distinct admissible
columns $c$, $c'$ depend on disjoint subsets of unconditioned entries and are therefore independent.

Now for any admissible $c$ and choosing any fixed choice of $\cD_1(\sigma(c))$ we have that
\[
 \bP(\cD_1(c)=\cD_1(\sigma(c)))=O(\deg(c)^{1/2})(2\pi p_0\deg(c))^{-1/p_0},
\]
where $p_0=\Theta(1/\sqrt{Np})$. However, $|N(r)\cup N(r')|\le 2Knp$,
so $p_0=\Theta(1/\sqrt{np})$ and by assumption $\deg(c)\ge \frac12np$. Hence,
\[
 \bP(\cD_1(c)=\cD_1(\sigma(c)))\le\exp\big(-\Theta(\sqrt{np}\log(np))\big).
\]
Now, if $\cD_1(c) =\cD_1(\sigma(c))$ holds for every $c\in N(r)$ then certainly it holds
for every admissible $c$, so,
\begin{align*}
 \bP\big( \forall c\in N(r),\ \cD_1(c) = \cD_1(\sigma(c))\big)
 &\le \bP\big( \forall c \text{ admissible},\ \cD_1(c) = \cD_1(\sigma(c))\big)\\
 &= \prod_{c\text{ admissible}} \bP\big(\cD_1(c) = \cD_1(\sigma(c))\big)\\
 &= \exp\big(-\Omega((np)^{3/2}\log(np))\big),
\end{align*}
where we used that there are at least $np/3$ admissible columns.
Finally, note that $d!\le (Knp)! = \exp\big(O(np\log(np))\big)$, so that
\begin{align*}
 \bP\big(\cD_2(r)=\cD_2(r')\big)
 &\le\exp\big(-\Omega((np)^{3/2}\log(np))+O(np\log(np))\big)\\
 &\le\exp\big(-\Omega(\log^{3/2}n)\big) = o(n^{-2}).
\end{align*}
Taking a union bound over all choices of $r$ and $r'$ then gives the result.
\end{proof}

\section{Full reconstruction}\label{sec:all}

In this section we show that once all but $o(n)$ of the rows and columns have been reconstructed,
then with high probability the remaining entries of $M$ can be deduced unless there is some simple obstruction.
Recall that we say an entry in $M$ is an isolated~$1$ if the entry is a $1$ but all other entries in
the same row or column are zeros.

\begin{lemma}\label{lem:fullrecon}
 Fix $\eps,C>0$ and suppose $(\frac13+\eps)\frac1n\log n\le p\le\frac{C}{n}\log^2 n$. Then
 w.h.p.\ every row and column of\/ $M$ can be determined except possibly for rows and columns
 where there is an isolated\/~$1$.
\end{lemma}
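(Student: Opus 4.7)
The plan is to reduce non-uniqueness of the reconstruction to the existence of a non-trivial automorphism of the bipartite graph $G$, and to show that at this density such automorphisms arise w.h.p.\ only from cyclic arrangements of isolated $1$s.

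By \Cref{lem:2nddegreestatistics}, we may assume that every heavy row and column has been correctly matched with its index; since a heavy row or column has $\Omega(\log n)$ ones, it cannot contain an isolated $1$, so the conclusion is automatic for them. Suppose $M'\ne M$ is another matrix with the same row and column multisets. Then there exist permutations $\alpha$ of $\cI$ and $\beta$ of $\cJ$ satisfying $M_{i,j}=M_{\alpha(i),\beta(j)}$ for all $i,j$, i.e., $(\alpha,\beta)$ is an automorphism of $G$, and $M'$ is obtained from $M$ by relabelling rows via $\alpha$; since $\cD_2$ is preserved by automorphisms of $G$, \Cref{lem:2nddegreestatistics} forces $\alpha$ and $\beta$ to be the identity on the heavy indices. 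So any non-trivial orbit lies entirely among the light indices.

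Take such an orbit pair $O_\alpha$, $O_\beta$ of sizes $k,m\ge2$ with $d=\gcd(k,m)$. The automorphism condition forces (i) the $k\times m$ submatrix $M|_{O_\alpha\times O_\beta}$ to be constant along each of its $d$ cyclic diagonals (and hence to contain $tkm/d$ ones for some $1\le t\le d-1$); (ii) the rows of $O_\alpha$ to agree on every column outside $O_\beta$; and (iii) the columns of $O_\beta$ to agree on every row outside $O_\alpha$. The \emph{clean} case $k=m=d$, $t=1$ with the rows of $O_\alpha$ zero outside $O_\beta$ (and symmetrically for columns) gives a cyclic permutation matrix whose $k$ ones are $k$ isolated $1$s of $M$: then every row and column in the orbit lies on an isolated $1$, which is precisely the exception allowed by the lemma.

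For every other shape $(k,m,d,t)$, the expected number of orbit configurations is at most
\[
 n^{k+m}\cdot p^{tkm/d}\cdot\big((1-p)^k+p^k\big)^{n-m}\big((1-p)^m+p^m\big)^{n-k}
 \;\le\;(c\log n)^{tkm/d}\,n^{(k+m)(1-c)-tkm/d},
\]
where $c=np/\log n\ge\tfrac13+\eps$. A case analysis over $(k,m,d,t)$ shows the exponent of $n$ is negative in every non-clean case, with margin $\Omega(\eps\min(k,m))$; the critical case $(k,m,d,t)=(2,4,2,1)$ (and its transpose) has margin exactly $6\eps$---this is precisely what makes $p\ge(\tfrac13+\eps)\tfrac{\log n}{n}$ the natural threshold. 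An additional factor $O(np^k+np^m)$ covers the event of a $1$ occurring in a row or column outside the block. Summing the expected counts over all non-clean shapes (a geometric-like sum dominated by small $k,m$) gives total $o(1)$, so by Markov w.h.p.\ every non-trivial automorphism decomposes into clean orbits. Any row or column not on an isolated $1$ is then fixed by every non-trivial automorphism, and hence uniquely determined. The main technical obstacle is uniform control of the exponent margin as the orbit sizes grow, which is what necessitates the precise threshold in the hypothesis.
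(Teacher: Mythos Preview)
Your reduction of non-reconstructibility to a non-trivial automorphism $(\alpha,\beta)$ of $G$ that fixes all heavy indices is correct. The genuine gap is in the orbit-pair enumeration. If $O_\alpha$ and $O_\beta$ are \emph{single} cycles of $\alpha$ and~$\beta$, then condition~(ii)---that the rows of $O_\alpha$ agree on every column outside $O_\beta$---is only forced for columns \emph{fixed} by~$\beta$, not for columns lying in other non-trivial $\beta$-orbits. For a concrete miss, take $\alpha=(r_1\,r_2)$ and $\beta=(c_1\,c_2)(c_3\,c_4)$, with the $2\times4$ block carrying $1$s exactly at $(r_1,c_1),(r_2,c_2),(r_1,c_3),(r_2,c_4)$, the rows $r_1,r_2$ agreeing off this block, the pairs $\{c_1,c_2\}$ and $\{c_3,c_4\}$ each agreeing off $\{r_1,r_2\}$, and some $\alpha$-fixed row $r_3$ adjacent to $c_3,c_4$ but not to $c_1,c_2$. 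The presence of $r_3$ rules out any $4$-cycle for~$\beta$, so no choice of a single $\beta$-orbit satisfies your~(ii); yet this is a non-clean automorphism and the expected number of such $(r_1,r_2,c_1,\dots,c_4)$ configurations is of order $n^{2-6np/\log n}\polylog n$, i.e.\ right at the critical exponent, so it cannot simply be absorbed. If instead you take $O_\beta$ to be the full support of~$\beta$, then $\beta|_{O_\beta}$ is no longer a single cycle, the ``$d=\gcd(k,m)$ cyclic diagonals with $tkm/d$ ones'' description breaks down, and the one-count in the block depends on the full cycle type of $\beta$ rather than just on~$m$. A correct union bound would therefore have to run over all pairs of cycle types on both sides; the claimed uniform margin $\Omega(\eps\min(k,m))$ is neither proved nor well-posed in that generality, and the restriction $1\le t\le d-1$ is already vacuous when $d=1$.

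For comparison, the paper does not go through automorphisms at all. Having placed the heavy rows and columns into sets $\cX,\cY$, it shows directly that any still-unplaced row $r$ with $N(r)\cap\cY\ne\emptyset$ would have to be confusable with some $r'$ sharing both $N(r)\cap\cY=N(r')\cap\cY$ and a matching multiset $[N(c)\cap\cX:c\in N(r)]$; a first-moment bound on the witnessing $4$-tuples $(r,r',c,c')$ gives expected count $n^{-1/3-4\eps+o(1)}$. Hence every unplaced row satisfies $N(r)\cap\cY=\emptyset$ (and symmetrically for columns), so the unplaced vertices form a union of small components of~$G$, and a spanning-tree count rules out components of size between $3$ and $o(n)$, leaving only isolated vertices and isolated edges.
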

\begin{proof}
We can assume, by \Cref{lem:2nddegreestatistics}, that we have already placed all heavy rows
and columns in their correct positions. Write $\cX$ for the set of rows and $\cY$ for the set
of columns that have been placed.

We note that we can place any row $r$ which has a unique neighbourhood $N(r)\cap\cY$
in the already placed columns as we can determine this set from either the
value or the index of~$r$. Moreover, we may assume that in the $k\times\ell$ submatrix $A$
formed from the unplaced rows and columns that every row and every column contains a~$1$. 
Indeed, if~$A$ contained a zero row, corresponding to the row~$r$ of~$M$, say,
then $N(r)=N(r)\cap\cY$. But for any row
$r'$ with $N(r')\cap\cY=N(r)\cap \cY$, either $|N(r')|>|N(r)|$, in which case the
rows are distinguished by their degrees (and we can determine the degrees of rows in any position from $\cC$), or $N(r')=N(r)$, in which case the row
values of $r$ and $r'$ are identical. Thus the position of any row value $r$
is uniquely identified up to permutation of equal rows.

Now suppose the row $r$ is still unplaced but $N(r)\cap\cY\ne\emptyset$.
Then there must be another row $r'$ with $|N(r)| = |N(r')|$ and $N(r)\cap \cY=N(r')\cap \cY$,
but $N(r)\ne N(r')$. As $|N(r)|=|N(r')|$
there are distinct columns $c$, $c'$ with $c\in N(r)\setminus N(r')$ and $c'\in N(r')\setminus N(r)$.
Clearly $c,c'\notin \cY$ so are also unplaced. We may also choose $c$ and $c'$ so that
$N(c)\cap\cX=N(c')\cap\cX$
as otherwise $r$ and $r'$ could be distinguished by the multisets $[N(c)\cap\cX:c\in N(r)\setminus N(r')]$
and $[N(c)\cap\cX:c\in N(r')\setminus N(r)]$, both of which can be identified from
either the values or positions of $r$ and~$r'$.

We now bound the number of $4$-tuples $(r,r',c,c')$ which could satisfy these conditions.
More specifically we count the number of $4$-tuples $(r,r',c,c')$ satisfying the following
slightly weaker conditions. 
\begin{itemize}
\item[(i)] $c\in N(r)\setminus N(r')$ and $c'\in N(r')\setminus N(r)$.
\item[(ii)] $N(r)\cap N(r')\ne \emptyset$. 
\item[(iii)] $N(r)\cap C=N(r')\cap C$ where $C=\{c''\ne c,c':|N(c'')\setminus\{r,r'\}|\ge\frac12np\}$.
\item[(iv)] $N(c)\cap R=N(c')\cap R$ where $R=\{r''\ne r,r':|N(r'')\cap C|\ge \frac12np\}$.
\end{itemize}
Note that all columns in $C$ and all rows in $R$ are heavy, so assumed already placed.
Fixing $(r,r',c,c')$, we note that by a similar calculation
as in \Cref{obs:heavyrows}, any column $c''\ne c,c'$ lies in $C$ with probability at least
$1-n^{-1/25}$ independently of (i) and (ii). Indeed, even conditioned on $N(r)$ and $N(r')$, for any $c''\ne c,c'$,
\[
 \bP(c''\notin C)=\bP\big(\Bin(n-2,p)<\tfrac12np\big)\le e^{-(1-o(1))np/8}\le n^{-1/25},
\]
independently for each~$c''$. Let $E$ be the event that $|C|<n-2-2n^{24/25}=(1-o(1))n$.
Then as the number of $c''\ne c,c'$ not in $C$ is stochastically dominated by a $\Bin(n-2,n^{-1/25})$
random variable, by \Cref{lem:chernoff},
\[
 \bP(E) \le e^{-(1/3)(n-2)n^{-1/25}}=n^{-\omega(1)}.
\]
Now conditioned on $N(r)$, $N(r')$ and $C$, and assuming $E$ does not hold, 
any row $r''\ne r,r'$ lies in $R$ with probability at least $1-n^{-1/25}$. Indeed,
\[
 \bP(r''\notin R)\le \bP\big(\Bin(|C|,p)<\tfrac12np\big)\le e^{-(1-o(1))np/8}\le n^{-1/25},
\]
as an entry being $1$ in row $r''$ is positively correlated with the condition that its column is in $C$.

Now, given $(r,r',c,c')$, the probability that (i) holds is $p^2(1-p)^2$.
Conditioned on this, (ii) holds with probability at most $np^2$. Conditioned on (i) and (ii),
(iii) holds with probability at most
\[
 \big((1-p)^2+p^2+2p(1-p)n^{-1/25}\big)^{n-2}=e^{-2pn+o(1)}\le n^{-2/3-2\eps+o(1)}.
\]
Conditioned on this the probability that (iv) holds but $E$ does not occur is then at most
\[
 \big((1-p)^2+p^2+2p(1-p)n^{-1/25}\big)^{n-2}=e^{-2pn+o(1)}\le n^{-2/3-2\eps+o(1)}.
\]
Thus the expected number of such $4$-tuples is at most
\[
 n^4\cdot \big(\bP(E)+p^2(1-p)^2\cdot np^2\cdot n^{-2/3-2\eps+o(1)}\cdot n^{-2/3-2\eps+o(1)}\big)
 =n^{-1/3-4\eps+o(1)}=o(1).
\]
Hence we may assume $N(r)\cap\cY=\emptyset$ for all unplaced rows $r$ and similarly
$N(c)\cap\cX=\emptyset$ for all unplaced columns~$c$.

Now consider the graph $G$ restricted to the unplaced rows and columns. The above argument
shows that we can assume this forms a union of components in $G$ of total cardinality at most $o(n)$.
Isolated vertices correspond to zero rows or zero columns. Isolated edges correspond to
isolated $1$s in~$M$. Thus is is enough to show that $G$ contains no components with between $3$
and $o(n)$ vertices. We count the expected number of such components by counting
the number of possible choices of spanning trees for such components. We get that the expected
number of these components is then at most
\begin{align*}
 \sum_{k=3}^{o(n)}\binom{2n}{k}p^{k-1}k^{k-2}(1-p)^{k(n-k)}
 &\le \sum_{k=3}^{o(n)}\big(2ne\cdot e^{-p(n-k)}\big)^k p^{k-1}\\
 &\le \sum_{k=3}^{o(n)} n^{1-(1/3+\eps+o(1)) k}=o(1).\qedhere
\end{align*}
\end{proof}

\begin{remark} 
For $p<\frac1{3n}\log n$ another obstruction to reconstructibility appears, namely
pairs of rows (or columns) each of which contains two 1s which themselves are
the unique $1$ in their column (or row). In graph terms these consist of at least
two components that are isomorphic to a path on $3$ vertices (with the central vertices
both in the same bipartite class). In general more complex tree components on $k$
vertices appear for $p<\frac{1}{kn}\log n$ and if two isomorphic copies of a
tree $T$ exist (with the isomorphism respecting the bipartition of~$G$) then
the matrix $M$ fails to be reconstructible as we can interchange the vertices
in one bipartite class of $T$ with their counterparts in another copy of $T$
without affecting the multisets of rows and columns. This does however affect
the matrix $M$ for $k\ge2$. For $k=1$ pairs
of isolated vertices in the same class correspond to pairs of zero rows or
zero columns which is the main obstacle to \emph{strong} reconstructibility,
but do not prevent reconstructing $M$ as no edges are changed when they are
swapped. It should be noted that isolated tree components are not
the only obstacle to reconstructibilty. For example, for $p<\frac1{4n}\log n$
can have paths on $5$ vertices with only the middle vertex possibly joined
to other vertices. This corresponds to say two rows $r,r'$ and three columns $c,c',c''$
with $N(c)=\{r\}$, $N(c')=\{r'\}$, $N(r)=\{c,c''\}$, $N(r')=\{c',c''\}$.
In this case $r$ and $r'$ can be swapped giving a different matrix with the same
multisets of rows and columns.
\end{remark}

\begin{proof}[Proof of \Cref{thm:main}.]
If $(\frac13+\eps)\frac1n\log n\le p\le \frac12$ then by \Cref{lem:largep} or \Cref{lem:fullrecon} we can
w.h.p.\ reconstruct $M$ up to rows and columns with isolated 1s. If there is at most one
isolated~$1$ then clearly we can reconstruct the whole of~$M$. If there are two or more isolated~$1$s then we can't
reconstruct $M$ as permuting the rows containing these isolated~$1$s will give a different
matrix with the same row and column multisets. Hence the result follows from \Cref{lem:two1s},
with the explicit constant in part~\emph{(c)} being as in \Cref{lem:two1s}.

If $p<(\frac13+\eps)\frac1n\log n$ then again by \Cref{lem:two1s} we have that w.h.p.\ there are
either two isolated $1$s or the matrix has fewer than two $1$s in total.
\end{proof}

\section{A fast algorithm for reconstruction}\label{sec:alg}

\begin{lemma}\label{re:alg}
 There exists an algorithm that w.h.p.\ either identifies all rows and columns,
 or finds a pair of isolated~$1$s and takes $O(n^2)$ time.
\end{lemma}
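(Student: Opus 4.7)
The plan is to turn the two-phase existence proof of \Cref{thm:main} into an explicit algorithm whose total work is $O(n^2)$, which is essentially linear in the input size $|\cR|+|\cC|=\Theta(n^2)$.

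\emph{Phase 1: match most row and column values by degree statistics.}
One scan of $\cR$ and $\cC$ yields every vertex degree in $G_R$ and $G_C$, and a second scan assembles, for each vertex $v$ on either side, the multiset $\cD_1(v)$; all of this fits in $O(n^2)$ time and space. If $p\ge Cn^{-1}\log^2 n$ we invoke \Cref{lem:largep}: store each $\cD_1(v)$ as a histogram on $\{0,1,\dotsc,n\}$, hash it, and match row values to row indices (and columns similarly) by sorting the $\cD_1$-hashes. Otherwise $p<Cn^{-1}\log^2 n$, and by \Cref{obs:maxdeg} every $\deg(v)=O(\log^2 n)$, so we sort each $\cD_1(v)$ in polylogarithmic time, hash it, build $\cD_2(v)=[\mathrm{hash}(\cD_1(u)):u\in N(v)]$, hash it too, and sort-match by $\cD_2$-hash; \Cref{lem:2nddegreestatistics} then guarantees that every heavy row and column value is w.h.p.\ placed at its correct index.

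\emph{Phase 2: fold in the remaining rows and columns.}
Let $\cX,\cY$ be the positions already assigned. For each unplaced row value $r$ compute the signature $N(r)\cap\cY$ by reading $r$ restricted to $\cY$, and for each unplaced row index $i$ compute $N(i)\cap\cY$ by looking at position $i$ of each placed column value; both take $O(n)$ per unplaced vertex and $O(n^2)$ in total. Hash these signatures, use (degree, signature-hash) as a key, and match by sort/merge. The 4-tuple calculation in the proof of \Cref{lem:fullrecon} implies that w.h.p.\ every row (resp.\ column) still unplaced afterwards has $N(r)\cap\cY=\emptyset$ (resp.\ $N(c)\cap\cX=\emptyset$).

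\emph{Phase 3: residual components.}
By the final step of the proof of \Cref{lem:fullrecon}, w.h.p.\ the bipartite subgraph induced on the unplaced vertices has no component of size $\ge 3$, so each unplaced row value is either the zero vector or the indicator of a single column. A single $O(n)$ pass counts the latter; if there are two or more, return any two of them together with their unique $1$-positions as the required pair of isolated $1$s. Otherwise the residual matrix is uniquely determined: fill with zeros except, possibly, for the $(i,j)$ entry read off from the unique unplaced row value and column value that each contain a single $1$.

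\emph{Main obstacle.} The correctness of every step is already furnished by \Cref{lem:largep,lem:2nddegreestatistics,lem:fullrecon}, so the challenge is purely algorithmic: each comparison of multisets (or multisets of multisets) must cost only linear time in its representation. The pinch-point is Phase~1, where a naive sort-and-compare of $\cD_1$ could cost $\Omega(n^2\log n)$ for $p$ near $\tfrac12$. This is avoided by replacing each multiset with a single hash value at each level --- histograms in the dense regime, sort-then-hash at the $O(\log^2 n)$ degrees in the sparse regime --- so that the per-vertex cost is bounded by $\deg(v)+n$ and the total cost by $O(n^2)$.
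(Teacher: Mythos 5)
Your overall architecture (local degree statistics to place most rows and columns, then signatures relative to the placed set, then a residual-component check for isolated $1$s) matches the paper's, and your handling of the dense regime is a legitimate variant: the paper simply invokes the earlier $O(n^2)$ algorithm of Atamanchuk, Devroye and Vicenzo for $p=\Omega(\frac1n\log^2 n)$ and restricts attention to $p=O(\frac1n\log^2n)$, whereas you implement \Cref{lem:largep} directly with histogram hashes; both give $O(n^2)$.

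However, your Phase 2 has a genuine gap. You match unplaced rows using only the key $(\deg(r),\,N(r)\cap\cY)$ and then assert that the $4$-tuple count in the proof of \Cref{lem:fullrecon} guarantees that w.h.p.\ every row left unplaced has $N(r)\cap\cY=\emptyset$. But that count only bounds the number of pairs $r\ne r'$ that agree on the degree, on the placed part of the neighbourhood (condition~(iii)), \emph{and} on the second-level statistic (condition~(iv), i.e.\ $N(c)\cap\cX=N(c')\cap\cX$ for suitable $c,c'$, which is exactly what is forced when the multisets $[N(c)\cap\cX:c\in N(r)]$ and $[N(c)\cap\cX:c\in N(r')]$ coincide). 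Condition~(iv) supplies the second factor $n^{-2/3-2\eps+o(1)}$ in the computation; without it the first-moment bound is roughly $n^{1/3-2\eps+o(1)}$, which is not $o(1)$ for small $\eps$. So pairs of rows that collide on your key but are separated only by the multisets $[N(c)\cap\cX:c\in N(r)]$ are not excluded, your algorithm may leave unplaced rows with $N(r)\cap\cY\ne\emptyset$, and the component analysis of Phase 3 then no longer applies. The fix is precisely what the paper's algorithm does: include the multiset $[N(c)\cap\cX:c\in N(r)]$ (and its column analogue) in the matching key; in the sparse regime this costs only $O(n\polylog n)$ additional time. A secondary, minor point: matching by hashes introduces a collision failure probability, so you should either remark that a random hash succeeds w.h.p.\ or replace hashing by canonical representations compared via radix sort, which still fits within $O(n^2)$.
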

\begin{proof}
We will restrict our attention to the case when $p=O(\frac1n\log^2 n)$ as for larger $p$
the result follows from~\cite{atamanchuk2023algorithm}.

Constructing a list of all neighbours of every row and column index and every row and column
value in $G_R$ and $G_C$ takes $O(n^2)$ time as we have to scan every vector in $\cR\cup\cC$.
As w.h.p.\ there are only $O(\log^2 n)$ neighbours of any
index or value, constructing $\cD_2(v)$ for every $v$ then takes only $O(n\polylog n)$ time.
Sorting and finding matches between indices and values then again takes $O(n\polylog n)$ time.
Identifying any isolated $1$s also takes $O(n\polylog n)$ time. So unless $p\ge(\frac13+\eps)\frac1n\log n$
by \cref{lem:two1s} we will w.h.p.\ have terminated with either a pair of isolated $1$s which demonstrates non-reconstructibility or a reconstructed matrix with at most one non-zero entry.
We may thus assume $p\ge (\frac13+\eps)\frac1n\log n$ from now on.

The final part of the algorithm relies on calculating the multisets $[N(c)\cap\cX:c\in N(r)]$
for rows $r$ and similarly for columns and identifying rows or columns that are uniquely determined.
Again, calculating these multisets takes only $O(n\polylog n)$ time.
If this fails to identify all rows and columns then as in the proof of \Cref{lem:fullrecon} w.h.p.\ the remaining rows
and columns all contain isolated $1$s, which we can easily check.

Thus overall the algorithm takes $O(n^2)$ time (with most of the time taken up in the
initial scanning of rows and columns to find their neighbours) and correctly identifies
$M$ w.h.p.\ or finds a pair of isolated $1$s showing that reconstruction is impossible.
\end{proof}

\section{Conclusion}

We have shown that there is a sharp threshold for reconstructibility at
$p\sim\frac{1}{2n}\log n$, and a sharp threshold for strong reconstructibility at
$p \sim\frac1n\log n$. These results both assume that we know \emph{all} the rows and columns in $\cR$ and $\cC$.  But what if we are missing some of the rows and columns?  For example, suppose we are given $\cR$, but only a (random) subset of $cn$ elements from $\cC$: for what range of $c$ and $p$ can we reconstruct $M$ with high probability?

It would also be very interesting to investigate the problem when there are errors in our data. For example, suppose every entry in each row from $\cR$ is given incorrectly with probability $q$: when can we reconstruct $M$ with high probability? This seems to be interesting even when $p=1/2$ and $q$ is a small constant.  

A similar question arises when we have missing data for both rows and columns: when can we reconstruct almost all of $M$? And, in a different direction, what happens if the data is corrupted adversarially?

\fontsize{11pt}{12pt}
\selectfont
	
\hypersetup{linkcolor={red!70!black}}
\setlength{\parskip}{2pt plus 0.3ex minus 0.3ex}

\bibliographystyle{Bib.bst}
\bibliography{bib.bib}

\end{document}